    \def\BibTeX{{\rm B\kern-.05em{\sc i\kern-.025em b}\kern-.08em
        T\kern-.1667em\lower.7ex\hbox{E}\kern-.125emX}}
\newtheorem{exe}{Example}
\newtheorem{corol}{Corollary}
\newtheorem{ass}{Assumption}
\newtheorem{defin}{Definition}
\newtheorem{prope}{Property}
\newtheorem{prob}{Problem}
\newtheorem{cla}{Claim}
\newtheorem{rem}{Remark}
\newtheorem{lem}{Lemma}
\newtheorem{prop}{Proposition}
\newtheorem{thm}{Theorem}
\newtheorem{fct}{Fact}
\newenvironment{lemma}{\begin{lem}}{\hfill $\square$ \end{lem}}
\newenvironment{remark}{\begin{rem} \rm}{\hfill $\bullet$ \end{rem}}
\newenvironment{assumption}{\begin{ass}}{\hfill $\bullet$ \end{ass}}
\newenvironment{theorem}{\begin{thm}}{\hfill $\square$ \end{thm}}
\newenvironment{problem}{\begin{prob}}{\hfill $\bullet$ \end{prob}}
\newif\ifitsdraft
\def\itsdraft{\global\itsdrafttrue}
\begin{document}

\title{
\textbf{On the Perturbed Projection-Based Distributed 
Gradient-Descent Algorithm: A Fully-Distributed Adaptive Redesign}
}
\author{Tarek Bazizi, Mohamed Maghenem, Paolo Frasca, Antonio Lor{\'i}a, and Elena Panteley 
\thanks{T. Bazizi and P. Frasca are with  Univ.\ Grenoble Alpes, CNRS, Inria, Grenoble INP, GIPSA-lab, 38000 Grenoble, France. 
M. Maghenem is with  Univ. Grenoble Alpes, CNRS, Grenoble INP, GIPSA-lab, 38000 Grenoble, France. A. Lor{\'i}a and E. Panteley are with L2S, CNRS, Gif-sur-Yvette, France. }
}

\maketitle

\begin{abstract}
In this work, we revisit a classical distributed gradient-descent algorithm, introducing an interesting class of perturbed multi-agent systems. The state of each subsystem represents a local estimate of a solution to the global optimization problem. 
Thereby, the network is required to minimize local cost functions, while gathering the local estimates around a common value. 
Such a complex task suggests the interplay of consensus-based dynamics with
gradient-descent dynamics. The latter descent dynamics involves the projection operator, which is assumed to provide corrupted projections of a specific form, reminiscent of existing (fast) projection algorithms. 
Hence, for the resulting class of perturbed networks, we are able to adaptively tune some gains in a fully distributed fashion, to approach the optimal consensus set up to arbitrary-desired precision. 
\end{abstract}

\section{INTRODUCTION}

In recent years, distributed optimization has gained a significant attention due to  various applications, including resource sharing/allocation \cite{doostmohammadian_survey_2025},  sensor networks \cite{Rabbat_sensor_networks}, and electric power systems \cite{Molzahn_power_sys}, among others.
Distributed optimization aims at minimizing a global cost function by distributing the computation across multiple agents (computation units). 
Each agent disposes of its own local cost function and the sum of the local cost functions represents the global cost function. The agents exploit only their own local cost function and interact with neighbor agents to coordinate and track a minimum value for the global cost function.

In the literature, various algorithms have been developed to address distributed optimization problems \cite{yang_survey_2019}. 
In the 
discrete-time setting, \cite{Nedic_2009} introduces a distributed version of the gradient-descent algorithm with a decreasing step-size, allowing the gradient influence to gradually vanish in time, which allows the network to achieve synchronization. This approach is extended to fixed step-sizes in \cite{EXTRA_shi_wei, Nedic_fixed_step_2017, YANG2018182_PI} to avoid harming the convergence to the optimal solution. Additionally, \cite{Nedic_distributed_step_2017} and \cite{Aug-DGM} propose an algorithm, in which every agent admits a distinct but fixed step-size. Other methods based on a dual formulation, such as distributed ADMM, are also considered in \cite{distributedADMM_tuto}. 

In the continuous-time setting, using Lyapunov stability theory,  distributed proportional-integral  algorithms are considered in \cite{wang_elia, gharesifard_distributed_2014},  to mitigate the synchronization error. Furthermore, given that each agent knows its own local cost function only, \cite{shi_reaching_2013, wei_ren_rendezvous_empty, lou_approximate_2014, lou_distributed_2016} assume that each agent can reconstruct the set of minima of its local cost function (the local optimal set). Thereby, transforming the local optimization problems into minimizing the distance with respect to  the local optimal set. This approach removes the need to have differentiable local cost functions, and requires the local optimal sets to be convex. 
In \cite{lou_approximate_2014} and \cite{lou_distributed_2016}, the projections to the local sets are corrupted by numerical perturbations. The first approach assumes that the erroneous projection belongs to a restricted triangular area including the actual projection. The second approach assumes that the perturbed projection lies within the boundary of the local optimal set. Both approaches characterize these errors using the angle created by the actual and the erroneous projections, assuming this error to decrease over time.

In this work, we revisit the class of distributed gradient-descent algorithms studied in the aforementioned literature. That is, we consider a multi-agent system, in which each subsystem dynamically updates its state to minimize a local cost function, while ensuring that the network collectively approaches a common minimizer. This complex task involves the interplay of a consensus-based dynamics, governed by a strongly-connected directed graph, and a gradient-descent dynamics. The latter descent dynamics involves a projection operator, which is assumed to provide perturbed projections as typical in (fast) projection algorithms \cite{usmanova2021fast}. Thanks to our approach, we are able to adaptively tune some consensus  and descent gains, so that the solutions of the resulting class of perturbed networks approach the optimal consensus set up to arbitrary-desired precision. Different from the existing works 
\cite{lou_approximate_2014,lou_distributed_2016}, the designed gains are dynamically updated and are not predefined  off-line as time-varying signals. Furthermore, their design, inspired by \cite{chen_adaptive_2024}, is fully distributed, in the sense that it requires solely the knowledge of desired precision. Neither the graph properties, size, nor the magnitude of the perturbation are required to be known.   

The rest of this paper is organized as follows. Preliminaries are in Section~\ref{Sec.Prelim}. The problem formulation is in Section~\ref{Sec.Probform}. The main result is in Section~\ref{Sec.MainRes}. Finally, numerical simulations are in Section~\ref{Sec.Sim}. 
\ifitsdraft
\else 
Due to space constraints, some proofs and intermediate results are omitted 
and can be found in \cite{Appendix--}.
\fi

\textbf{Notation.}
For $v  \in \mathbb{R}^N$,  $v^\top$ denotes its transpose, $|v|$ denotes its Euclidean norm, and $\text{diag}\{v\} = \text{diag}\{v_1,v_2,...,v_N\} \in \mathbb{R}^{N \times N}$ denotes the diagonal matrix whose $i$-th diagonal element is $v_i$, the $i$-th element of $v$. For a set $K \subset \mathbb{R}^N$,  $|v|_K := \min \{|x-y| : y \in K \}$ denotes the distance of $x$ to the set $K$. For a symmetric positive semi-definite matrix 
$Q \in \mathbb{R}^{N \times N}$,  $\lambda_i(Q)$ denotes the $i$th smallest eigenvalue of $Q$. 
We use $\mathbb{D}^N$ to denote the set of diagonal matrices of dimension $N$, and we let 
$\mathbb{B}^N := \{ D \in \mathbb{D}^N : |D_{ii}| \leq 1~~ \forall i \in 
\{1,2,..,N\} \}$. A class $\mathcal{K}$ function  $\alpha : \mathbb{R}_{\geq 0} \rightarrow \mathbb{R}_{\geq 0}$ is continuous, increasing, and $\alpha(0) = 0$.  A class $\mathcal{K}_\infty$ function  $\alpha : \mathbb{R}_{\geq 0} \rightarrow \mathbb{R}_{\geq 0}$ satisfies $\alpha \in \mathcal K$ and $\alpha(s) \to \infty$ as $s \to \infty$.
A class $\mathcal L$ function $\sigma : \mathbb{R}_{\geq 0} \rightarrow \mathbb{R}_{\geq 0}$ is continuous, non-increasing, and $\sigma(s) \to \infty$ as $s \to \infty$. A class $\mathcal K \mathcal L$ function $\beta : \mathbb{R}_{\geq 0} \times \mathbb{R}_{\geq 0} \rightarrow \mathbb{R}_{\geq 0}$ satisfies $\beta(.,t) \in \mathcal K$ for any fixed $t\geq 0$ and $\beta(s,.) \in \mathcal L$ for any fixed $s \geq 0$. By $ \bold 1_N := [1~1~...~1]^\top$, we denote the vector of dimension $N$ whose entries are equal to $1$. For a continuously differentiable function $g : \mathbb{R}^N \rightarrow \mathbb{R}$, $\nabla g :  \mathbb{R}^N \rightarrow \mathbb{R}^N$ denotes the gradient of $g$. Finally, we use $\mathbb{B}$ to denote the closed unit ball centered at the origin. 

\section{Preliminaries} \label{Sec.Prelim}
\subsection{The projection map}
Consider a closed subset $K \subset \mathbb{R}^n$ and let $x \in \mathbb{R}^n$. 
The projection of $x$ on $K$ is given by
\begin{align} \label{eqproj} 
\Pi_K(x) := \text{argmin} \{|x-y| : y \in K \}. 
\end{align}
When $K$ is convex, it is shown in \cite[Page 23-24]{aubin_differential_1984} that
\begin{align} \label{eqvarproj}
|\Pi_K(x)-\Pi_K(y)| \leq |x-y| \qquad  \forall x, y \in \mathbb{R}^n.
\end{align}
Moreover, under the convexity of the set $K$, the map $\Pi_K(x)$ is a singleton, $x \mapsto |x|^2_K := |x - \Pi_K(x)|^2$ is continuously differentiable, and 
 \begin{align} \label{eqCDdist}
 \nabla|x|^2_K = 2(x-\Pi_K(x)) \qquad \forall x \in \mathbb{R}^n. 
 \end{align}
Additionally, given $x$, $y \in \mathbb{R}^n$ and 
$A \subseteq B \subset \mathbb{R}^n$ closed and convex, it holds that \cite{shi_reaching_2013}
\begin{align}   
\label{proj_ineq_1}
    (x-\Pi_{B}(x))^\top (\Pi_{A}(x) - \Pi_{B}(x)) & \leq  0, 
    \\
  -(x-\Pi_{B}(x))^\top(x-\Pi_{A}(x))   &\leq - |x|_{B}^2.  \label{proj_ineq_1+}
\end{align} 

\subsection{Graph theory}
A directed graph, or di-graph,  $\mathcal{G}(\mathcal{V},\mathcal{E})$ is characterized by a set of nodes $\mathcal{V} := \{ 1,2,...,N \}$ and a set of directed edges $\mathcal{E}$.  The set of edges $\mathcal{E}$ consists of ordered pairs of the form 
$(k,i)$,  which indicates a directed link from node $k$ to node $i$. We consider directed graphs with no self-arcs. Note that when there exists a directed edge $(k,i) \in \mathcal{E}$,  then node $k$ is called an \emph{in-neighbor} of node $i$.  
We use  $\mathcal{N}_i \subset 
\{ 1,2,...,N \} \backslash \{ i \}$ to denote the indexes of the in-neighbors of agent $i$.  Furthermore,  we assign a weight $a_{ik} \geq 0$ to each edge $(k,i)$.  That is,  $a_{ik} = 0$ if $(k,i) \notin \mathcal{E}$.  
The Laplacian matrix is denoted by
$L := D - A$,
where $D := \text{diag} \{ d_1,d_2,...,d_N \}$ with
$d_i := \sum^N_{j=1}  a_{ij}$ for all $i \in 
\{ 1,2,...,N \}$ is the diagonal part of $L$,  and $A := [a_{ij}]$ is the \textit{adjacency} matrix of $\mathcal{G}$.  The directed graph $\mathcal{G}$ is \textit{strongly connected} if,  for any two distinct nodes $i$ and $j$,  there is a path from $i$ to $j$.  

\section{ Problem formulation} \label{Sec.Probform}

Consider the unconstrained optimization problem 
\begin{equation}
\label{OptUnc}
\min_{y \in \mathbb{R}^n} \sum_{i=1}^{N} f_i(y),   
\end{equation}
where each $f_i:\mathbb{R}^n \rightarrow \mathbb{R}$ is assumed to be convex. 

In  distributed optimization approaches,  different computation units are used to collaboratively solve \eqref{OptUnc}.  In particular,   every $f_i$ is assumed to be known to a single unit only.  Hence, a key step consists in recasting \eqref{OptUnc} into the  
constraint-coupled 
optimization problem 
\begin{equation}
\label{optimization problem (fi)}
\begin{split}
    \min_{x \in \mathbb{R}^{nN}} &\sum_{i=1}^{N} f_i(x_i), \quad x := [x_1^\top ~ x_2^\top ~...~ x_N^\top], \\
    \textrm{s.t.} \quad  &x_i=x_j \quad \textrm{for all} \quad i,j \in \mathcal{V}. 
\end{split}    
\end{equation}
Furthermore, for every $i \in \{1,2,...,N\}$, we use  $X_i \subset \mathbb{R}^n$ to denote the set of points where the cost function $f_i$ reaches its minimum. Note that every $X_i$ is convex since so is the corresponding function $f_i$. 
Now, by assuming that   
\begin{align} \label{eqinters}
X_o := \bigcap_{i=1}^{N} X_i \neq \emptyset,
\end{align}  
we can further recast \eqref{OptUnc} into the quadratic-cost optimization problem 
\begin{equation}
\label{optimization problem (projections)}
\begin{aligned}
\min_{x \in \mathbb{R}^{nN}} &  \frac{1}{2}\sum_{i=1}^{N} |x_i|_{X_i}^2 \\
   \textrm{s.t.} \quad &  x_i=x_j \quad \textrm{for all} \quad i,j \in \mathcal{V}. 
   \end{aligned}
\end{equation}

\subsection{A Class of Networked Systems}

According to \cite{shi_reaching_2013}, \cite{lou_approximate_2014}, and \cite{lou_distributed_2016}, we can solve \eqref{OptUnc} by solving 
\eqref{optimization problem (projections)} dynamically 
through a distributed gradient-descent algorithm. 
More specifically,  we can design a network of multi-agent systems,  where the state of each subsystem,  denoted by $x_i$,  represents a local estimate of a solution to~\eqref{OptUnc}. The design of this network must achieve two concurrent goals: 
1 - Every agent minimizes the corresponding local cost function. 2- The local minimizers, generated by the agents, converge to a common value. 

To achieve the first goal, a
gradient-descent dynamics minimizing the quadratic cost function 
$\frac{1}{2}|\cdot|^2_{X_i}$ is added,  thanks to 
the continuous differentiability of  $\frac{1}{2}|\cdot|^2_{X_i}$; see \eqref{eqCDdist}. 
To achieve the second goal, an interconnection protocol, governed by the directed graph $\mathcal{G}(\mathcal{V}, \mathcal{E})$, is needed. 
The following graph assumption is made. 
\begin{assumption} \label{assGPH}
The graph $\mathcal{G}$ is strongly connected.  
\end{assumption}

In summary, every agent $i \in \mathcal{V}$ updates dynamically its local solution to \eqref{OptUnc}, denoted by $x_i$, according to 
\begin{equation} \label{system_xi}
\dot{x}_i = \gamma_i \alpha_i \sum_{j = 1}^{N} a_{ij} (x_j -x_i) - \alpha_i (x_i - \Tilde{\Pi}_{X_i}(x_i)),
\end{equation}
where, for each $i \in \mathcal{V}$, $\gamma_i > 0$ and $\alpha_i > 1$ are, respectively, the coupling and the descent gains that we will adaptively design later.  
Furthermore, for each $i \in \mathcal{V}$,  the map $\Tilde{\Pi}_{X_i}: \mathbb{R}^n \rightarrow  \mathbb{R}^n$ represents a corrupted (numerical) estimate of the projection map $\Pi_{X_i}$ introduced in \eqref{eqproj}, and is given by  
\begin{equation}
\label{corrupted projection}
\Tilde{\Pi}_{X_i}(x_i) := \Pi_{X_i}(x_i) 
+ \zeta_i p_i(x_i),
\end{equation}
where $\zeta_i \in (0,1)$ indicates the precision of the numerical projection, and $p_i:\mathbb{R}^n \rightarrow \mathbb{R}^n$ stands for the worst-case projection error, verifying the following assumption. 
\begin{assumption} \label{Assbd}
There exists $\Bar{p}>0$ such that 
$$ \sup_{y \in \mathbb{R}^n} |p_i(y)| \leq  \Bar{p} \qquad \forall i \in 
\mathcal{V}. $$
\end{assumption}

\begin{remark}
Approximate projections verifying \eqref{corrupted projection} and Assumption~\ref{Assbd} are consistent with various iterative (fast) projection algorithms; see \cite{usmanova2021fast} and \cite{JMLR:v18:16-147}. In those algorithms,  higher   precisions (i.e., small values of $\zeta_i$) are achieved at the price of larger number of iterations. Furthermore, the worst-case projection $p_i$ is usually unknown but bounded and can result from solving dual problems \cite{usmanova2021fast}.  
\end{remark}

Our framework is based on the key assumption that we are able to tune every projection precision $\zeta_i$  to guarantee 
$$ \zeta_i \leq  1/\alpha_i \qquad \forall i \in \mathcal{V}, $$
where $\alpha_i$ is the gradient-descent gain introduced in \eqref{system_xi}.
As a consequence, we recover the class of perturbed multi-agent systems of the form 
\begin{align}
\dot{x}_i = \gamma_i \alpha_i \sum_{j = 1}^{N} a_{ij} (x_j -x_i)  & - \alpha_i (x_i - \Pi_{X_i}(x_i))  + p_i(x_i)
   \nonumber  \\ &
  \forall i  \in 
\mathcal{V}.  \label{system_xinew}
    \end{align}

\begin{remark} \label{remElena}
In the particular setting where 
$$\gamma := \gamma_1 \alpha_1 = ... = \gamma_N \alpha_N > 0, $$
the networked system in \eqref{system_xinew} belongs to the class of networks studied in \cite{lazri:hal-04609468}, where the term $- \alpha_i (x_i - \Pi_{X_i}(x_i))$ is replaced by a general function $f_i(x_i)$ such that $u_i \mapsto x_i$, under $\dot{x}_i = f_i(x_i) + u_i$, 
defines a semi-passive map. 
See also \cite{10298266,7809144} for the unperturbed case. The goal in the aforementioned 
works is restrained to guaranteeing boundedness and ultimate boundedness of the network's solutions, as well as practical synchronization, i.e., showing that, for every $\varepsilon>0$, there exists   $\gamma^* > 0$ such that, for each $\gamma \geq \gamma^*$,  
$$ \lim_{t \rightarrow + \infty} |x_i(t) - x_j(t)| \leq \varepsilon \qquad \forall i,j \in \mathcal{V}. $$  
In fact, adapting the 
Lyapunov-based approach in \cite{lazri:hal-04609468, 10298266, 7809144} to our setting is key to asymptotically fulfill the constraint in \eqref{optimization problem (projections)} up to a desired precision $\varepsilon$.    
\end{remark}

\subsection{Adaptive design}

A dynamical way to solve \eqref{OptUnc} is by guaranteeing  asymptotic convergence towards the optimal consensus set 
$$ \mathcal{A}:= \{x\in X_1 \times ... \times X_N : x_1 = \dots = x_N\} $$ 
for the multi-agent system \eqref{system_xinew}. 
In the unperturbed case, when $p \equiv 0$, this problem is solved in \cite{shi_reaching_2013} for any $\alpha_1, \gamma_1, ..., \alpha_N, \gamma_N > 0$. 
However, it is out of reach in the presence of the perturbation $p$. 
Instead, through a fully-distributed adaptive design of the sequence 
$\{(\gamma_i,\alpha_i)\}^{N}_{i=1}$, we establish in this work convergence of the solutions to an arbitrarily-small neighborhood of the set $\mathcal{A}$. 
More specifically, we address the following problem. 
\begin{problem} \label{PB1}
Given $\varepsilon > 0$,
we design 
$\{(\gamma_i, \alpha_i)\}^{N}_{i=1}$ as
\begin{align} 
\dot{\gamma}_i & = 
h_i(x_i,\gamma_i,\{(x_j, \gamma_j)\}_{j \in \mathcal{N}_i}, \varepsilon), 
\label{eqdesgamma}
\\
\dot{\alpha}_i & = 
g_i(x_i,\alpha_i, \{(x_j,\alpha_j)\}_{j \in \mathcal{N}_i}, \varepsilon), \label{eqdesalpha}
\end{align}
so that the multi-agent system \eqref{system_xinew} under  \eqref{eqdesgamma}-\eqref{eqdesalpha} achieves 
\begin{align} \label{eqObject1} 
\lim_{t \rightarrow +\infty} 
|x(t)|_{\mathcal{A}_s} & \leq \varepsilon, ~~ {\mathcal{A}_s} := \{x_i = x_j ~~ \forall i,j \in \mathcal{V} \},
\\    
\lim_{t \rightarrow +\infty} 
|x(t)|_{\mathcal{A}_o} & \leq \varepsilon, ~~ \mathcal{A}_o := X_1 \times ... \times X_N, 
\label{eqObject2} 
\end{align}
and, at the same time,
\begin{align} \label{eqObject+} 
\max \{ | \gamma_i |_\infty, | \alpha_i |_\infty  \}  < \infty \qquad \forall i \in \mathcal{V}. 
\end{align}
\end{problem}

\begin{remark}
It is important to note that verifying  \eqref{eqObject1}-\eqref{eqObject2} does not necessarily 
 lead to 
$$ \lim_{t \rightarrow +\infty} 
|x(t)|_{\mathcal{A}} \leq \varepsilon, $$
even though $\mathcal{A} = \mathcal{A}_s \cap \mathcal{A}_o$. However, 
since both functions 
$$x \mapsto |x|^2_{\mathcal{A}} ~~ \text{and} ~~ x \mapsto 
\max \{ |x|^2_{\mathcal{A}_o}, |x|^2_{\mathcal{A}_s} \} $$ 
are continuous, positive definite with respect to the compact set $\mathcal{A}$, and radially unbounded, we conclude, according to \cite{kellett2014compendium},  the existence of $\rho \in \mathcal{K}$ such that
 $$|x|_{\mathcal{A}} \leq \rho(\max \{ |x|_{\mathcal{A}_s}, |x|_{\mathcal{A}_o} \}) \quad \forall x \in \mathbb{R}^{nN}. $$  
Furthermore,  the function $\rho$ would be linear, according to \cite[Theorem 1]{19921570108}, provided that there exists $\delta > 0$ such that 
$$ \max_{y \in \mathcal{A}} |y|_{\mathbb{R}^{nN} \backslash \mathcal{A}_o} > \delta. $$
In particular, by solving Problem~\ref{PB1}, we only guarantee that 
$$ \lim_{t \rightarrow +\infty} 
|x(t)|_{\mathcal{A}} \leq \rho(\varepsilon), $$
Hence, according to our framework, guaranteeing convergence of $x$ to an arbitrarily-small neighborhood of $\mathcal{A}$ requires the knowledge of the function $\rho$.   
\end{remark}

Different from the existing literature \cite{lou_approximate_2014,lou_distributed_2016}, the design of $\{(\gamma_i, \alpha_i)\}^{N}_{i=1}$ required in Problem~\ref{PB1} is adaptive and does not rely on predefined time-varying exogenous signals. Furthermore, the design we propose is fully distributed, in the sense that it requires solely the knowledge of the precision $\varepsilon$. Neither the graph spectrum, size, nor the magnitude of the perturbation $p$ are needed to be known.  

\section{Main Result} \label{Sec.MainRes}

We propose to dynamically update the sequence 
$\{(\gamma_i, \alpha_i)\}^{N}_{i=1}$ according to 
\begin{align}
    \dot{\gamma}_i & = k_\gamma \sum_{j \in \mathcal{N}_i}  a_{ij} (\gamma_j -\gamma_i) 
    \nonumber \\ &
    + \frac{1}{2} \text{dsat}_{\varepsilon} \left( |x_i|_{X_i}  \right) + 
    \frac{1}{2}
 \text{dsat}_{\varepsilon} \left(\sum_{j \in \mathcal{N}_i}  |x_j -x_i| \right), \label{xi-gamma}
    \\
  \dot{\alpha}_i & =   k_\alpha \sum_{j \in \mathcal{N}_i} a_{ij} (\alpha_j -\alpha_i) \nonumber \\ & + \frac{1}{2} \text{dsat}_{\varepsilon} \left( |x_i|_{X_i}  \right) + \frac{1}{2} \text{dsat}_\varepsilon \left( \sum_{j \in \mathcal{N}_i}  |x_j -x_i| \right),
  \label{xi-alpha}
\end{align}
where $k_\alpha$, $k_\gamma > 0$ are free design parameters,  and the function $\text{dsat}_\varepsilon : \mathbb{R}_{\geq 0} \rightarrow \mathbb{R}_{\geq 0}$ is given by
\begin{equation}
\text{dsat}_\varepsilon(a) := \frac{1+|a-\varepsilon|-|a-\varepsilon-1|}{2} \quad \forall a \geq 0.
\end{equation}

\begin{remark}
The function $\text{dsat}_\varepsilon$ is non-negative and bounded from above by $1$, vanishes on $[0,\varepsilon]$, and linear on $[\varepsilon, \varepsilon + 1]$. 
\end{remark}

The adaptive design in 
\eqref{xi-gamma}-\eqref{xi-alpha}
is inspired by \cite{chen_adaptive_2024}, where a synchronization problem for heterogeneous multi-agent systems is addressed. Roughly speaking, 
using boundedness of  $\text{dsat}_\varepsilon$, we can show that the $\gamma_i$s (resp., the $\alpha_i$s), after some time, must remain close to their convex combination $\gamma_s := \sum^{N}_{i = 1} v_i \gamma_i$ (resp., $\alpha_s := \sum^{N}_{i = 1} v_i \alpha_i$), for some $\{ v_i \}^N_{i=1} \subset (0,1)$ with  $\sum^N_{i=1} v_i = 1$.   
Hence, for all $i \in \mathcal{V}$, we can write 
$\gamma_i = \gamma_s + \tilde{\gamma}_i$ (resp., $\alpha_i = \alpha_s + \tilde{\alpha}_i$), for a bounded residual $\tilde{\gamma}_i$ (resp.,  $\tilde{\alpha}_i$).  
As a result, by accommodating the study in \cite{lazri:hal-04609468} in the presence of $\tilde{\gamma}_i$ (resp., $\tilde{\alpha}_i$), see Remark~\ref{remElena}, we guarantee convergence of $t \mapsto x(t)$ to the set $\mathcal{A}_s + \varepsilon \mathbb{B}$ provided that $\gamma_s$ and $\alpha_s$ exceed specific values. 
At the same time, the analysis of $\dot{\gamma}_s$ and $\dot{\alpha}_s$ allows us to conclude that a necessary condition for $\gamma_s$ and $\alpha_s$ to stop increasing is that the solution $t \mapsto x(t)$ reaches the set $\mathcal{A}_s + \varepsilon \mathbb{B}$. Thereby, we must 
fulfill \eqref{eqObject1} at some point. 
We might still witness the increase of $\alpha_s$ and $\gamma_s$ until  
convergence of $t \mapsto x(t)$ to $\mathcal{A}_o + \varepsilon \mathbb{B}$
is additionally achieved. Indeed, we show, via leveraging some Lyapunov-based tools, that the latter convergence property must occur for finite values of $\alpha_s$ and $\gamma_s$. Thereby, we guarantee both boundedness of $\{(\alpha_i, \gamma_i)\}^N_{i=1}$ and \eqref{eqObject2}.

We are now ready to state our main result. 

\begin{theorem} \label{thm1}
Under Assumptions~\ref{assGPH}-\ref{Assbd}, the multi-agent system in \eqref{system_xinew} under \eqref{xi-gamma}-\eqref{xi-alpha} solves Problem~\ref{PB1}. 
\end{theorem}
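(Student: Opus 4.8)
The plan is to exploit two structural features of the design: the left null vector of the Laplacian, which distills each family of adaptive gains into a single \emph{monotone} scalar, and the practical-synchronization machinery of \cite{lazri:hal-04609468} (see Remark~\ref{remElena}), into which the gain residuals enter as bounded disturbances. I first study the gain dynamics \eqref{xi-gamma}-\eqref{xi-alpha}. By Assumption~\ref{assGPH} and Perron--Frobenius, $L$ admits a positive left null vector $v\in\mathbb{R}^N$ with $v^\top L=0$ and $\sum_i v_i=1$. Setting $\gamma_s:=v^\top\gamma$ and $\alpha_s:=v^\top\alpha$, the Laplacian terms cancel under $v^\top$, leaving $\dot\gamma_s=\dot\alpha_s=\tfrac12\sum_i v_i\big[\text{dsat}_\varepsilon(|x_i|_{X_i})+\text{dsat}_\varepsilon(\sum_{j\in\mathcal N_i}|x_j-x_i|)\big]\ge 0$. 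Hence $\gamma_s,\alpha_s$ are nondecreasing and share the same forcing, so $\gamma_s-\alpha_s$ is constant and the two scalars diverge or stay bounded together. Writing $\tilde\gamma:=\gamma-\gamma_s\mathbf 1_N$ and using $L\mathbf 1_N=0$, I obtain $\dot{\tilde\gamma}=-k_\gamma L\tilde\gamma+(I-\mathbf 1_Nv^\top)f$, where $f$ is the dsat forcing with $\|f\|_\infty\le 1$. Since $-k_\gamma L$ is Hurwitz on the disagreement subspace complementary to $\mathbf 1_N$, $\tilde\gamma$ (and likewise $\tilde\alpha$) is ISS with respect to this bounded input and therefore stays uniformly bounded for all time.

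Next I substitute $\gamma_i=\gamma_s+\tilde\gamma_i$ and $\alpha_i=\alpha_s+\tilde\alpha_i$ into \eqref{system_xinew} and adapt the Lyapunov construction of \cite{lazri:hal-04609468}. Combining a disagreement functional weighted by $v$ with the aggregate cost $\tfrac12\sum_i|x_i|^2_{X_i}$, whose gradient is supplied by \eqref{eqCDdist} and whose sign structure follows from \eqref{proj_ineq_1}-\eqref{proj_ineq_1+}, the consensus gain multiplies a negative-definite disagreement term and the descent gain multiplies the good term $-\alpha_s|x_i|^2_{X_i}$, while the residuals $\tilde\gamma_i,\tilde\alpha_i$ and the perturbation $p_i$ (bounded by $\bar p$ via Assumption~\ref{Assbd}) act as bounded disturbances. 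The expected output is a practical-stability estimate with thresholds $(\gamma^\star,\alpha^\star)$: whenever $\gamma_s\ge\gamma^\star$ and $\alpha_s\ge\alpha^\star$, every solution is driven into $(\mathcal A_s+\varepsilon\mathbb B)\cap(\mathcal A_o+\varepsilon\mathbb B)$, with boundedness of $x$ inherited from the same analysis.

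I then close the loop using monotonicity. Either $\gamma_s$ (hence $\alpha_s$) diverges or it converges. Divergence would eventually exceed $(\gamma^\star,\alpha^\star)$, forcing the trajectory into the target neighborhood where every $\text{dsat}_\varepsilon$ argument is at most $\varepsilon$; there $\dot\gamma_s=\dot\alpha_s=0$, contradicting divergence. Thus $\gamma_s,\alpha_s$ are bounded, which together with the bounded residuals yields \eqref{eqObject+} and, through the Lyapunov estimate, boundedness of $x$ and hence of $\dot x$. Monotone boundedness gives $\gamma_s\to\gamma_\infty<\infty$, so $\dot\gamma_s$ is integrable and, being a Lipschitz function of signals with bounded derivative, uniformly continuous; Barbalat's lemma forces $\dot\gamma_s\to 0$. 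Since $v_i>0$ and each summand is nonnegative, every $\text{dsat}_\varepsilon$ term vanishes asymptotically, and because $\text{dsat}_\varepsilon$ is zero exactly on $[0,\varepsilon]$ this gives $\limsup_t|x_i(t)|_{X_i}\le\varepsilon$ and $\limsup_t\sum_{j\in\mathcal N_i}|x_j-x_i|\le\varepsilon$ for all $i$. By strong connectivity these per-agent and per-edge bounds translate (up to dimension- and graph-dependent constants) into \eqref{eqObject1}-\eqref{eqObject2}.

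The hard part is the circular dependence between the gains and the state: convergence of $x$ demands large gains, yet the gains cease to grow precisely when $x$ enters the target set. The monotonicity-plus-contradiction argument resolves it, but it hinges on aligning the thresholds $(\gamma^\star,\alpha^\star)$ of the adapted \cite{lazri:hal-04609468} estimate with the exact vanishing set of $\text{dsat}_\varepsilon$, and on making that practical-stability estimate \emph{uniform} over the bounded residuals $\tilde\gamma_i,\tilde\alpha_i$. Confirming that the ultimate radius is genuinely governed by $\varepsilon$ rather than by a gain-dependent quantity is the most delicate point.
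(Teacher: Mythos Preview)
Your skeleton matches the paper's: the positive left null vector $v$ of $L$, the monotone scalars $\gamma_s=v^\top\gamma$, $\alpha_s=v^\top\alpha$ with $\dot\gamma_s=\dot\alpha_s\ge0$, the ISS-type bound on the residuals $\tilde\gamma,\tilde\alpha$ (this is Lemma~\ref{lem-gamma-ball}), and the monotonicity-plus-contradiction device are all exactly what the paper uses. Your Barbalat closure is a genuine alternative to the paper's threshold-crossing argument and in fact sharpens a step the paper leaves terse (``$\alpha_s,\gamma_s$ remain bounded implying $x(t)\to\mathcal A_o+\varepsilon\mathbb B$''). Note, though, that Barbalat needs $\dot x$ bounded, hence $x$ bounded; the paper secures this through explicit GUB/GUUB estimates (Lemma~\ref{lemGUUB}) that are only valid once $\gamma_s\ge\gamma_o$, $\alpha_s\ge\alpha_o$, a transient regime you do not address.

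The real gap is your expectation that a \emph{single} Lyapunov construction in the style of \cite{lazri:hal-04609468}, built from a $v$-weighted disagreement functional plus $\tfrac12\sum_i|x_i|^2_{X_i}$, drives $x$ into $(\mathcal A_s+\varepsilon\mathbb B)\cap(\mathcal A_o+\varepsilon\mathbb B)$ once the gains are large. The paper deliberately separates the two objectives: first practical synchronization (property~(IV), Lemma~\ref{lemGPracS}) via the $(x_m,e_v)$ change of coordinates, and \emph{then}, exploiting the achieved synchronization, practical optimality (property~(V), Lemma~\ref{lemadded}) via a different function $Z(x)=\sum_i v_i|x_i|^2_{X_o}$---distance to the \emph{intersection} $X_o$, not to the individual $X_i$---together with a max function $d(x)=\max_i|x_i|^2_{X_o}$ and the linear-regularity constant of \cite{19921570108} linking $|\cdot|_{X_o}$ to $\max_j|\cdot|_{X_j}$. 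The disagreement functional does not by itself control $|x_i|_{X_i}$, and with $\sum_i|x_i|^2_{X_i}$ the consensus cross-term $(x_i-\Pi_{X_i}(x_i))^\top(x_j-x_i)$ has no obvious sign; this is precisely why the paper switches to $|x_i|^2_{X_o}$ and invokes \eqref{proj_ineq_1}-\eqref{proj_ineq_1+}. Your contradiction step (``divergence forces the trajectory into the target neighborhood where every dsat vanishes'') needs the optimality half to go through, so you cannot bypass Lemma~\ref{lemadded}. Finally, your last inference---from the vanishing of the per-agent dsat arguments to \eqref{eqObject1}-\eqref{eqObject2}---yields $C\varepsilon$-neighborhoods for a graph- and dimension-dependent $C$, not the exact $\varepsilon$ of Problem~\ref{PB1}; the paper sidesteps this by reading \eqref{eqObject1} directly off the $\mathcal{KL}$ bound in property~(IV) rather than off the dsat terms.
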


\begin{proof}
We start introducing the vector $v \in \mathbb{R}^N$ such that $v^\top L = 0$ and $v^\top  \bold 1_N = 1$. Under Assumption~\ref{assGPH}, we conclude that such a  vector $v$ is unique and that its entries are positive; see \cite[Lemma 1]{lazri:hal-04609468}.

The proof of Theorem~\ref{thm1} is based on two steps. 

\subsubsection*{Step 1 
(Verifying \eqref{eqObject1})}
We do so using the following four  intermediate results. 

\begin{itemize}
\item[(I)] We show that the design in \eqref{xi-gamma}-\eqref{xi-alpha} forces the matrices 
$$ \Gamma := \text{diag}\{\gamma_1,...,\gamma_N\} \text{ and } \Theta := \text{diag}\{ \alpha_1,...,\alpha_N \}
$$
to reach and remain within the set 
\begin{align*} 
\hspace{-0.3cm} \mathcal{D} := \left\{ D \in \mathbb{D}^N: D \in d_s I_N + d_e \mathbb{B}^N, ~ d_s := \bold{1}_{N}^\top D v  \right\}, 
\end{align*}
for some $d_e > 0$ and after some time $T_e \geq 0$. 
\ifitsdraft
See Lemma~\ref{lem-gamma-ball}.  
\else
See \cite[Lemma 1]{Appendix--}.
\fi

\item[(II)]
We show global uniform (in $\Gamma$ and $\Theta$) boundedness (GUB) of trajectories. That is,  
we show the existence of $\gamma_o > 0$ and $\alpha_o > 1$ such that, for every $r_{o} > 0$, there exists $R := R(r_{o}, \gamma_{o},\alpha_o) \geq r_{o}$ such that,  for all 
$\Gamma, \Theta : \mathbb{D}^N \rightarrow \mathcal{D}$ with 
\begin{equation}
\label{eq_s} 
\begin{aligned}  
 \gamma_s(t) & := \left(\bold{1}_{N}^\top \Gamma(t) v \right) \geq \gamma_o \qquad \forall t \geq 0, 
 \\ 
 \alpha_s(t) & := \left(\bold{1}_{N}^\top \Theta(t) v \right) \geq \alpha_o \qquad \forall t \geq 0, 
\end{aligned}
\end{equation}
we have 
\[
|x(0)| \leq r_{o} \Longrightarrow |x(t)| \leq R  \qquad \forall t  \geq 0.
\]
\ifitsdraft
See Lemma~\ref{lemGUUB}.
\else 
See \cite[Lemma 3]{Appendix--}.
\fi 

\item[(III)] We show global uniform (in $\Gamma$ and $\Theta$) ultimate boundness (GUUB). That is,  we guarantee the existence of $\gamma_{o} > 0$, $\alpha_o > 1$, and $r =  r(\gamma_o, \alpha_o) > 0$ such that, for all $r_{o} >0$, there exists $T = T(r_{o},\gamma_{o}, \alpha_o) \geq 0$ such that,  for all 
$\Gamma, \Theta: \mathbb{D}^N \rightarrow \mathcal{D}$ with 
$ \gamma_s(t) \geq \gamma_o$ and $\alpha_s(t) \geq \alpha_o$ for all $t \geq 0$, 
we have 
\begin{align*}
  |x(0)| \leq r_{o} 
\Longrightarrow |x(t)| \leq r \quad \forall t \geq  T. 
\end{align*}
\ifitsdraft
See Lemma~\ref{lemGUUB}.
\else 
See \cite[Lemma 3]{Appendix--}.
\fi

\item[(IV)] 
We show global practical synchronization. That is, 
we guarantee the existence $\beta_1$, $\beta_2 \in \mathcal{K L}$ and $\alpha^* > \alpha_o > 1$ such that, for any $\varepsilon > 0$, there exists $\gamma^*(\varepsilon) > \gamma_o > 0$ such
that, for all $\Gamma, \Theta: \mathbb{D}^N \rightarrow \mathcal{D}$ with 
$\gamma_s(t) \geq \gamma^*$ and $\alpha_s(t) \geq \alpha^*$ for all $t \geq 0$, we have  
\begin{align} \label{eqpropty1}
\hspace{-0.6cm} |x(t)|_{\mathcal{A}_s}^2 \leq  \frac{\varepsilon}{2} + \beta_1(|x(0)|_{\mathcal{A}_s}, \alpha^* t) + \frac{\beta_2(|x(0)|, \alpha^* t)}{\gamma^*}
\end{align}  
for each $(x(0),t) \in \mathbb{R}^{nN} \times \mathbb{R}_{\geq 0}$. 
\ifitsdraft
See Lemma~\ref{lemGPracS}.
\else
See \cite[Lemma 4]{Appendix--}.
\fi

\end{itemize}
To complete the proof of \eqref{eqObject1}, we reason on the time interval $[T_e, + \infty)$. Indeed, 
one can easily show that the solutions cannot blow up on the interval $[0,T_e]$. 
Next, using property (I), we conclude that 
$$(\Gamma(t), \Theta(t)) \in \mathcal{D} \times \mathcal{D} \qquad \forall t \geq T_e. $$
Furthermore, we note that 
\begin{align*} 
& \dot{\alpha}_s = \dot{\gamma}_s  =
\\ &
\frac{1}{2} \sum^{N}_{i=1} v_i \left[ \text{dsat}_\varepsilon \left(\sum_{j \in \mathcal{N}_i}  |x_j -x_i| \right) + \text{dsat}_\varepsilon \left( |x_i|_{X_i} \right)  \right] \geq 0.
\end{align*}
Hence, the variables 
$\gamma_s$ and 
$\alpha_s$ are doomed to increase if the solution $t \mapsto x(t)$ remains away from $\left( \mathcal{A}_s + \varepsilon \mathbb{B} \right) \cap \left( \mathcal{A}_o + \varepsilon \mathbb{B} \right)$.
However, using property (IV),  we conclude that once 
$\gamma_s$ passes the value $\gamma^*$ and $\alpha_s$ passes $\alpha^*$, the solution $t \mapsto x(t)$ must enter and remain within 
$\mathcal{A}_s + \varepsilon \mathbb{B}$,
leading to 
\begin{align}  \label{eqgover}
\dot{\alpha}_s = \dot{\gamma}_s  =
\frac{1}{2} \sum^{N}_{i=1} v_i  \text{dsat}_\varepsilon \left( |x_i|_{X_i} \right)  \geq 0.
\end{align} 

\subsubsection*{Step 2 
(Verifying \eqref{eqObject2} and \eqref{eqObject+})}

To do so, we use  
the following property, established in \ifitsdraft Lemma~\ref{lemadded}.
\else
\cite[Lemma 5]{Appendix--}.
\fi

\begin{itemize}
\item[(V)] We show global practical optimality. That is, given $\varepsilon > 0$,  we guarantee 
the existence of $\alpha^{**}(\varepsilon) \geq \alpha^*$ and $\gamma^{**}(\varepsilon) \geq \gamma^{*}(\varepsilon)$ such that, for all $r_o > 0$, there exists $T_o = T_o(r_o, \alpha^{**}, \gamma^{**}) \geq 0$ such that, for all $\Gamma, \Theta : \mathbb{D}^N \rightarrow  \mathcal{D}$ with $\gamma_s(t) \geq \gamma^{**}(\varepsilon)$ and $\alpha_s(t) \geq \alpha^{**}(\varepsilon)$ for all $t \geq 0$ and for all solution $t \mapsto x(t)$ with $|x(0)| \leq r_o$, we have 
\begin{align} \label{eqopti} 
|x_i(t)|_{X_i} \leq \varepsilon \qquad \forall t \geq T_o, ~~~ \forall i \in \mathcal{V}. 
\end{align}
\end{itemize}

As in Step 1, we reason on the time interval $[T_e, + \infty)$ since, after property (I), we have 
$$(\Gamma(t), \Theta(t)) \in \mathcal{D} \times \mathcal{D} \qquad \forall t \geq T_e. $$
Furthermore, after Step 1, we conclude the existence of $T'_e \geq T_e$, after which, $\alpha_s$ and $\gamma_s$  are governed by \eqref{eqgover}. Hence, either $\alpha_s$ and $\gamma_s$ remain bounded implying that 
$ x(t) \rightarrow \mathcal{A}_o + \varepsilon \mathbb{B}$,
otherwise, there exists $T''_e \geq T'_e$, after which, $\alpha_s$ and $\gamma_s$ must be larger than $\alpha^{**}$ and $\gamma^{**}$, respectively. Hence, using property (V), we conclude that 
\begin{align*}  
|x_i(t)|_{X_i} \leq \varepsilon \quad \forall t \geq T''_e + T_o(|x(T''_e)|,\alpha^{**}, \gamma^{**}), ~ \forall i \in \mathcal{V}.
\end{align*}
This verifies \eqref{eqObject2} and 
lead to 
$$\dot{\alpha}_s(t) = \dot{\gamma}_s(t) = 0 \quad  \forall t \geq T''_e + T_o(|x(T''_e)|,\alpha^{**}, \gamma^{**}), $$ which guarantees boundedness of $(\alpha_s,\gamma_s)$.
\end{proof}

\section{Simulations} \label{Sec.Sim}
Consider the optimization problem in \eqref{optimization problem (fi)} with $N=3$ and  $n=2$. For each 
$i \in \{1,2,3\}$, the set of minimizers of the local cost function $f_i$ is given by 
\begin{align}
\nonumber
    &X_i := \left\{y \in \mathbb{R}^n ~:~ (y-c_i)^\top Q_q (y-c_i) \leq 1, q \in \{1,2,3\}\right\},
\end{align} 
where 
$$ Q_1 := \begin{bmatrix}
    1/3 & -1/3
    \\
    -1/3 & 7/9
\end{bmatrix}, 
Q_2 := \begin{bmatrix}
   1/3 & 1/3\\
    1/3 & 7/9
\end{bmatrix},  Q_3 := \frac{7}{10} I_2. $$
Furthermore,  $c_1 := (0, 0)$,
$c_2 := (0, 1)$, and $c_3 := (1, 0)$. 

The Laplacian matrix of 
the considered communication graph is given by 
\begin{equation*}
    L := \begin{bmatrix}
        1 & -1 & 0\\
        0 & 2 & -2\\
        -1 & -\frac{1}{2} & \frac{3}{2}
    \end{bmatrix}.
\end{equation*}
Given $i \in \{1,2,3\}$, the projection map $\tilde{\Pi}_{X_i}$ is computed via solving the  constrained optimization problem  
\begin{align*} 
  \text{min} |x_i - w|^2  \text{ s.t. }  w \in X_i
\end{align*}
 using the algorithm in \cite{usmanova2021fast}. The latter algorithm  is shown to approximate the true projection to 
 arbitrarily-desired precision. Such precision is adjustable at the expense of increasing the computation time. In particular, given a precision 
 $\zeta_i > 0$, 
 the algorithm in \cite{usmanova2021fast} provides, in $O(\text{log}(1/\zeta_i))$ steps, an approximation $\tilde{\Pi}_{X_i}(x_i)$ that verifies
\begin{equation*}
    \left|x_i - \tilde{\Pi}_{X_i}(x_i) \right|^2 \leq \left|x_i - \Pi_{X_i}(x_i) \right|^2 + \zeta_i p_i(x_i).
\end{equation*}
For some $y \mapsto 
\{p_i(y)\}^{3}_{i=1}$ verifying Assumption~\ref{Assbd}. 

For the proposed adaptive design of $\{\gamma_1, \gamma_2, \gamma_3 \}$ and 
$\{\alpha_1, \alpha_2, \alpha_3 \}$ in \eqref{xi-gamma} and \eqref{xi-alpha},   we use the precision $\varepsilon = 0.3$, the adaptation gains $k_\gamma = 10$, $k_\alpha = 1$, and the projection precisions to verify 
\begin{align} \label{eqalphazeta1}  
\alpha_i \zeta_i \leq 1  \qquad \forall i \in \{1,2,3 \}.  
\end{align}

The resulting trajectories are simulated on the interval 
$[0,5]$ with discretization step $\Delta t := 10^{-2}$. The agents are initialized at $x_1(0) = \begin{bmatrix}
    3 & 4
\end{bmatrix}^\top$,  $x_2(0) = \begin{bmatrix}
    5 & 3/2
\end{bmatrix}^\top$, and  
$x_3(0) = \begin{bmatrix}
    -3 & 4
\end{bmatrix}^\top$, and the gains are initially set to 
$$\Gamma(0) := 
\begin{bmatrix}
    -3/25 & 0 & 0 \\
    0 & 2/3 & 0 \\
    0 & 0 & 1
\end{bmatrix}, ~ \Theta(0) :=  \begin{bmatrix}
    1/4 & 0 & 0\\
    0 & -3/2 & 0\\
    0 & 0 & 1/2
\end{bmatrix}.$$

From the phase portrait depicted in Figure~\ref{phaseportrait_plot}, 
the three agents converge towards a neighborhood of the intersection set 
$X_o := X_1 \cap X_2 \cap X_3$. Furthermore, as shown in Figure~\ref{gammaalpha_plot}, the weights $\gamma_1$, $\gamma_2$, $\gamma_3$, $\alpha_1$, $\alpha_2$, and $\alpha_3$ become positive after some adaptation time and converge to some constant values. This validates the conclusions of Theorem~\ref{thm1}. 

Due to the explicit form of the sets $X_1$, $X_2$, and $X_3$, we are able to analytically compute the exact projections and compare them with there approximations using the considered algorithm: the projection errors are shown to be non-null, see Figure~\ref{proj_error_alphazetal leq 1}, and to increase if we relax the projection precision $\zeta_1$, $\zeta_2$ and $\zeta_3$ while satisfying  
\begin{align} \label{eqalphazeta2} 
\alpha_i \zeta_i \leq 300  \qquad \forall i \in \{1,2,3 \}.
\end{align}
  Finally, we can see in Figure~\ref{fig:four} that, under \eqref{eqalphazeta2}, the conclusions of Theorem~\ref{thm1} remain valid, although the projection errors are larger than under \eqref{eqalphazeta1}. 
\begin{figure}
    \centering
    \includegraphics[width=0.8\linewidth]{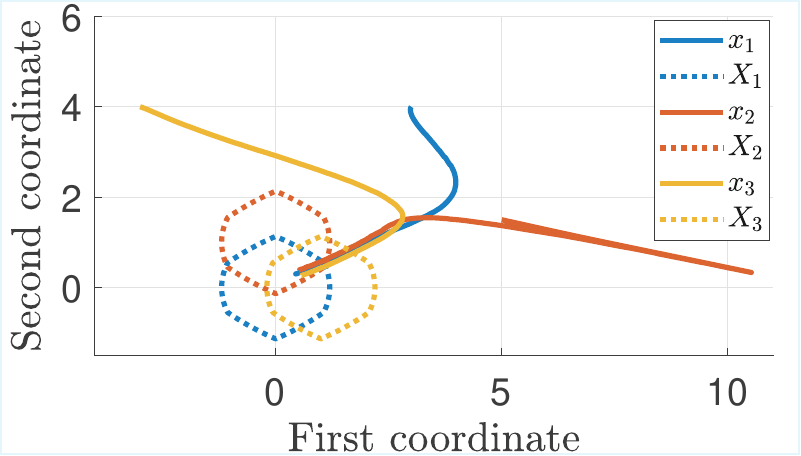}
    \caption{First two coordinates of the phase portrait of system \eqref{system_xinew} under \eqref{xi-gamma}-\eqref{xi-alpha} for the example system in Section~\ref{Sec.Sim}.}
    \label{phaseportrait_plot}
\end{figure}

\begin{figure}
    \centering
    \includegraphics[width=0.78\linewidth]{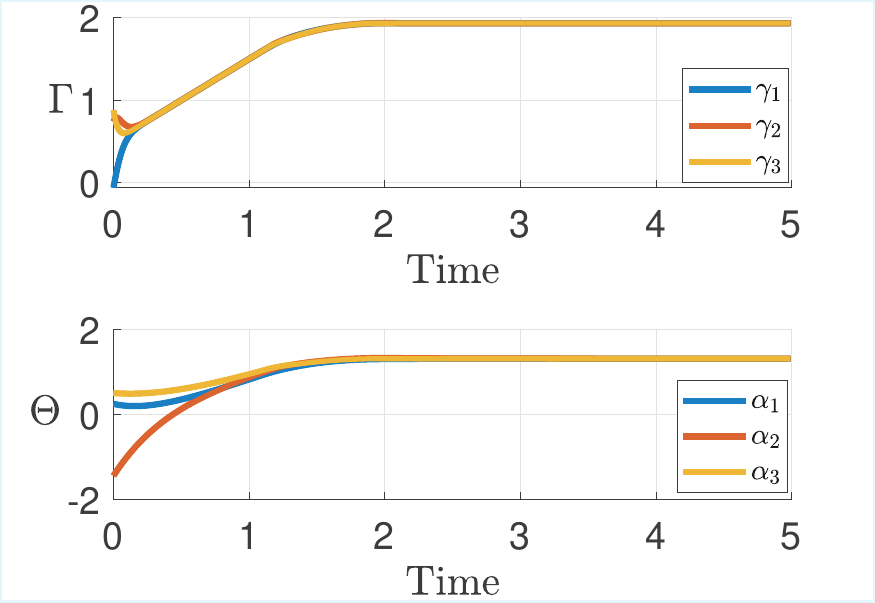}
    \caption{Evolution of $\Gamma$ and $\Theta$ according to \eqref{xi-gamma} and \eqref{xi-alpha}.}
    \label{gammaalpha_plot}
\end{figure}
\begin{figure}
    \centering
    \includegraphics[width=0.77\linewidth]{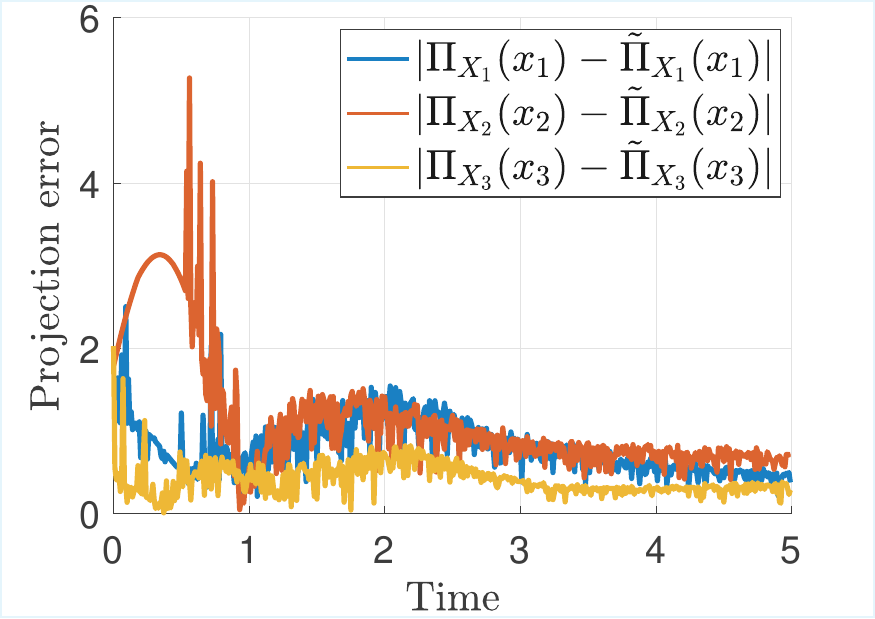}
    \caption{Projection errors with $\alpha_i\zeta_i \leq 1$.}
    \label{proj_error_alphazetal leq 1}
\end{figure}
\begin{figure}
    \centering
    \includegraphics[width=0.77\linewidth]{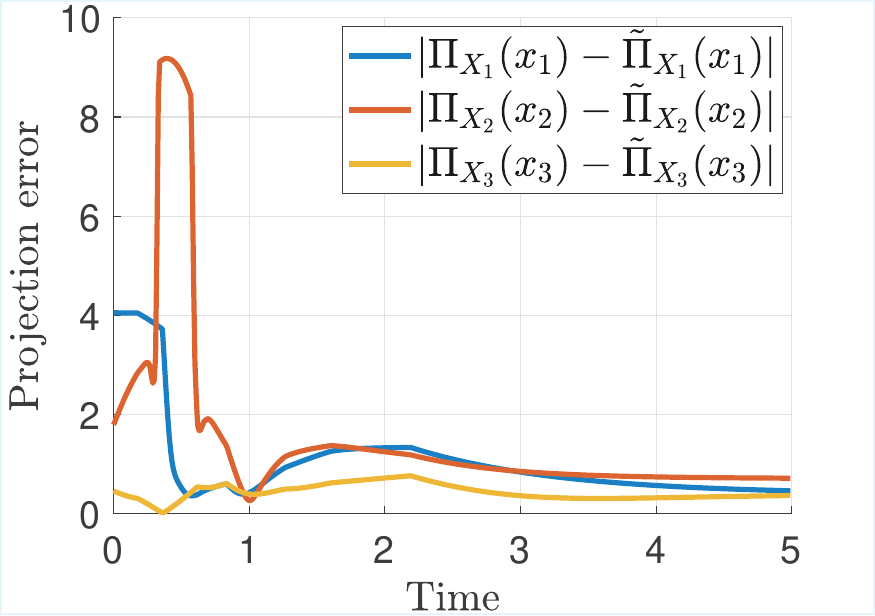}
    \caption{Projection errors with $\alpha_i\zeta_i \leq 300$.}
    \label{fig:four}
\end{figure}
\section{Conclusion}
This work proposed an adaptive and a fully-distributed redesign of an existing  distributed gradient-descent algorithm. The approach allowed us to handle a class of bounded perturbation that can be the result of using approximate (fast) projection algorithms. As a result, we are able to approach a solution to the global optimization problem up to arbitrary small precisions.  In the future, we aim to  relax the convexity of the local sets $(X_1,...,X_N)$ while using proximal methods, and assess the network's behavior when $\bigcap^{N}_{i=1} X_i = \emptyset$.  

\bibliographystyle{ieeetr}  
\bibliography{bib1}

\ifitsdraft

\appendix 

\begin{lemma} 
\label{lem-gamma-ball}
Consider the networked system \eqref{xi-gamma}-\eqref{xi-alpha} such that Assumption~\ref{assGPH} holds. Then, for $$ d_e :=  \frac{8 \max_{i}\{ v_i \}}{\min_{i}\{ v_i \} (k \lambda_2(Q))^2}, $$ 
we have 
$\Gamma(t)  \in \mathcal{D}$ and    
 $\Theta(t) \in \mathcal{D}$ 
 for all 
 $$t \geq T_\gamma := 2\frac{ \ln \left( \frac{d_e \min_i\{v_i\}}{2 W(\gamma(0))} \right) }{ k_\gamma \lambda_2(Q) } \max_i \{ v_i \} $$
and 
  for all 
 $$t \geq T_\alpha := 2 \frac{ \ln \left( \frac{d_e \min_i\{v_i\}}{2 W(\alpha(0))} \right) }{ k_\alpha \lambda_2(Q)  } \max_i \{ v_i \}, $$
 respectively, where $Q := L^\top \text{diag} \{v\} + \text{diag} 
 \{v\} L$ and $v := (v_{1},v_{2}, ...,  v_{N}) \in \text{Ker}(L^\top)$. 
\end{lemma}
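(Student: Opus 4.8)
The plan is to read both adaptation laws \eqref{xi-gamma} and \eqref{xi-alpha} as the same linear consensus dynamics driven by a uniformly bounded forcing term, and to bound the deviation of each gain from its $v$-weighted average with a weighted-Laplacian Lyapunov function. Stacking the gains into $\gamma := (\gamma_1,\dots,\gamma_N)^\top$, the law \eqref{xi-gamma} reads $\dot\gamma = -k_\gamma L\gamma + u$, where $u_i := \tfrac12\text{dsat}_\varepsilon(|x_i|_{X_i}) + \tfrac12\text{dsat}_\varepsilon(\sum_{j\in\mathcal N_i}|x_j-x_i|)$. Because $\text{dsat}_\varepsilon$ takes values in $[0,1]$, each $u_i\in[0,1]$, so $u$ is bounded uniformly in the state, the graph, and the perturbation. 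The dynamics of $\alpha$ in \eqref{xi-alpha} are identical with $k_\gamma$ replaced by $k_\alpha$ and the same $u$, so it suffices to treat $\gamma$ and then substitute $k_\alpha$ for $k_\gamma$ to obtain $T_\alpha$.

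I then pass to the consensus error $\tilde\gamma := \gamma - \gamma_s\bold 1_N = P\gamma$, with $P := I_N - \bold 1_N v^\top$ and $\gamma_s := v^\top\gamma = \bold 1_N^\top\Gamma v = d_s$; note that $\Gamma\in\mathcal D$ is precisely the requirement $|\tilde\gamma_i|\le d_e$ for every $i$. Using $v^\top L = 0$, $L\bold 1_N = 0$, and $v^\top\bold 1_N = 1$, I get $\dot\gamma_s = v^\top u \ge 0$ and $\dot{\tilde\gamma} = -k_\gamma L\tilde\gamma + Pu$. Taking the weighted Lyapunov function $W(\gamma) := \tfrac12\tilde\gamma^\top\text{diag}\{v\}\tilde\gamma$, its derivative along the error dynamics is $\dot W = -\tfrac{k_\gamma}{2}\tilde\gamma^\top Q\tilde\gamma + \tilde\gamma^\top\text{diag}\{v\}Pu$, where $Q = L^\top\text{diag}\{v\}+\text{diag}\{v\}L$ is exactly the matrix appearing in the statement.

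The key step is the coercivity of $Q$ along $\tilde\gamma$. Under Assumption~\ref{assGPH}, $Q$ is symmetric positive semidefinite with $Q\bold 1_N=0$ (since $L\bold 1_N=0$ and $L^\top v=0$) and kernel exactly $\text{span}\{\bold 1_N\}$; as $\tilde\gamma$ satisfies $v^\top\tilde\gamma=0$ and $\bold 1_N\notin\{w:v^\top w=0\}$, the form $\tilde\gamma^\top Q\tilde\gamma$ is lower-bounded by $c_0\lambda_2(Q)|\tilde\gamma|^2$ for an explicit $c_0>0$ depending only on $v$. Combining this with the norm equivalence $\tfrac12\min_i v_i|\tilde\gamma|^2\le W\le\tfrac12\max_i v_i|\tilde\gamma|^2$ and splitting the forcing term $\tilde\gamma^\top\text{diag}\{v\}Pu$ by Young's inequality into a fraction of the dissipation plus a residual constant proportional to $\sup|u|\le\sqrt N$, I arrive at a scalar inequality $\dot W\le -aW+b$ with $a$ proportional to $k_\gamma\lambda_2(Q)/\max_i v_i$. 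The comparison lemma then drives $W$ exponentially into the ball $\{W\le b/a\}$, and by the lower norm equivalence this ball is contained in $\{|\tilde\gamma_i|\le d_e\}$ for the stated $d_e$; the rate $a$ and the residual $b$ are exactly what produce the logarithm and the factor $k_\gamma\lambda_2(Q)$ in the entry time $T_\gamma$ (and $k_\alpha\lambda_2(Q)$ in $T_\alpha$). I expect the main obstacle to be the constant bookkeeping rather than the structure: one must choose the Young weight and the coercivity constant $c_0$ so that $b/a$ collapses to the announced $d_e = 8\max_i v_i/(\min_i v_i(k\lambda_2(Q))^2)$, which forces one to track the ratio $\max_i v_i/\min_i v_i$ and the two powers of $\lambda_2(Q)$ coming respectively from the subspace coercivity and from the forcing split.
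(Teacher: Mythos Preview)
Your approach matches the paper's almost exactly: the same weighted Lyapunov function $W = \tilde\gamma^\top V\tilde\gamma$ (the paper omits your factor $\tfrac12$), the same coercivity of $Q = L^\top V + VL$ on the consensus error combined with the uniform bound $|g_i|\le 1$ on the $\text{dsat}_\varepsilon$ forcing, a Young split to reach $\dot W \le -aW + b$, and the comparison lemma to extract the exponential entry time. The only noteworthy difference is that the paper simply asserts $\tilde\gamma^\top Q\tilde\gamma \ge \lambda_2(Q)|\tilde\gamma|^2$ (effectively taking your $c_0 = 1$, without the subspace adjustment you flag) and is fairly loose with the remaining constants, so your more careful bookkeeping may not reproduce the announced $d_e$ on the nose; structurally, however, nothing is missing.
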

\begin{proof}
Without loss of generality, we  prove the statement for $\Gamma$ only. 
We start letting 
$$  \gamma := \Gamma \bold 1_N \in \mathbb{R}^N   $$
and we introduce the Lyapunov function candidate 
\begin{align*}
    W(\gamma) & := \left(\gamma - \bold 1_N \gamma_s \right)^\top V \left(\gamma - \bold 1_N \gamma_s \right). 
\end{align*}
The derivative of $V$ along \eqref{xi-gamma} verifies 
\begin{align*}
  \dot{W}(\gamma) & =  - k_\gamma  \left(\gamma - \bold 1_N \gamma_s \right)^\top Q \left(\gamma - \bold 1_N \gamma_s \right) 
\\ &
+  2 \left(\gamma - \bold 1_N \gamma_s \right)^\top V g(x), 
\end{align*}
where $Q := L^\top V + V L $ and, for all $i \in \mathcal{V}$, we have 
$$  g_i(x) := \frac{1}{2}  \text{dsat}_\varepsilon \left( |x_i|_{X_i} \right) + 
\frac{1}{2}  \text{dsat}_\varepsilon \left(\sum_{j \in \mathcal{N}_i}  |x_j -x_i| \right).  $$
Next, using \cite[Lemma 1]{lazri:hal-04609468}, we conclude that $Q$ is positive semi-definite and that $\text{Ker}(Q) = \boldsymbol{1}_N$, which implies that 
$$  \left(\gamma - \bold 1_N \gamma_s \right)^\top Q \left(\gamma - \bold 1_N \gamma_s \right) \geq \lambda_2 (Q)  |\gamma - \bold 1_N \gamma_s|^2.  $$
On the other hand, we know that $ |g_i(x)| \leq 1$ for all $x \in \mathbb{R}^{nN}$ and for all $i \in \mathcal{V}$.  
As a result, we can write
\begin{align*}
  \dot{W}(\gamma) & \leq   - k_\gamma \lambda_2(Q) |\gamma - \bold 1_N \gamma_s |^2 
+  2 |\gamma - \bold 1_N \gamma_s| 
\\ & \leq 
 - \frac{k_\gamma \lambda_2(Q)}{2}  |\gamma - \bold 1_N \gamma_s |^2 
+  \frac{2}{k_\gamma \lambda_2(Q)}.  
\end{align*}
Next, we use the fact that 
$$ \min_{i}\{ v_i \} |\gamma - \bold 1_N \gamma_s |^2 \leq  W(\gamma) \leq \max_i \{ v_i \}  |\gamma - \bold 1_N \gamma_s |^2, $$
to obtain 
\begin{align*}
  \dot{W}(\gamma) &  \leq 
 - \frac{k_\gamma \lambda_2(Q)}{2 \max_{i}\{ v_i \}} W(\gamma) + \frac{2}{k_\gamma \lambda_2(Q)}.  
\end{align*}
Solving the latter inequality, we find
\begin{align*}
 W(\gamma(t)) & \leq W(\gamma(0))
 \exp^{ - \frac{k_\gamma \lambda_2(Q) t}{2 \max_{i}\{ v_i \}}  }  
 \\ & + \left[ 1 - \exp^{ - \frac{k_\gamma \lambda_2(Q) t}{2 \max_{i}\{ v_i \}} } \right] \frac{4 \max_{i}\{ v_i \}}{(k_\gamma \lambda_2(Q))^2},
\end{align*}
and thus 
\begin{align*}
 |\gamma - \bold 1_N \gamma_s |^2  & \leq \frac{W(\gamma(0))}{\min_{i}\{ v_i \}}
\exp^{ - \frac{k_\gamma \lambda_2(Q) t}{2 \max_{i}\{ v_i \}}  }  
\\ & + \left[ 1 - \exp^{ - \frac{k_\gamma \lambda_2(Q) t}{2 \max_{i}\{ v_i \}}  } \right] \frac{4 \max_{i}\{ v_i \}}{\min_{i}\{ v_i \} (k_\gamma \lambda_2(Q))^2}
\\ & \leq 
\frac{W(\gamma(0))}{\min_{i}\{ v_i \}}
\exp^{ - \frac{(k_\gamma \lambda_2(Q))}{2 \max_{i}\{ v_i \}} t }  +  \frac{d_e}{2}.
\end{align*}
The proof is completed by noting that
\begin{align*}
\frac{W(\gamma(0))}{\min_{i}\{ v_i \}}
\exp^{ - \frac{k_\gamma \lambda_2(Q)}{2\max_{i}\{ v_i \}} t } + \frac{d_e}{2} \leq d_e \qquad \forall t \geq T_\gamma. 
\end{align*}
\end{proof}
The following Lemma establishes a key Lyapunov inequality that allows us to prove GUB and GUUB. 
\begin{lemma} \label{propLyapder}
Consider the networked system in \eqref{system_xinew} such that Assumption~\ref{assGPH} holds. 
Then, there exists $\gamma_o > 0$ and $\alpha_o > 1$ such that, for each $\Gamma, \Theta : \mathbb{R}_{\geq 0} \rightarrow \mathcal{D}$ such that  
$ \gamma_s(t) \geq \gamma_o$ and 
$\alpha_s(t) \geq \alpha_o$ for all $t \geq 0$,  
the function candidate 
\begin{align} \label{eqLyapcandid}
 W(x) :=  \sum_{i=1}^{N} v_{i} |x_i|^2,  
\end{align}
with $v := (v_{1},v_{2}, ...,  v_{N}) \in \text{Ker}(L^\top)$, satisfies
\begin{equation} \label{eqdotW} 
\begin{aligned}
  \dot  W (x) \leq 
- \sum_{i=1}^{N} v_i H_i(x_i)
 - \frac{\gamma_s \alpha_s}{2} 
\lambda_2(Q) |x|^2_{\mathcal{A}_s}, 
\end{aligned}
\end{equation}
where  
\begin{align*}
H_i(x_i) & := 2 \alpha_i \left(|x_{i}|^2 - x_{i}^\top \Pi_{X_i}(x_i) \right) - 2 p_i(x_i)^\top x_i, 
\\
 Q & := L^\top V + V L, ~~ V := \text{diag}(v_1,...,v_N).
\end{align*}
\end{lemma}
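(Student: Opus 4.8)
The plan is to differentiate $W$ along \eqref{system_xinew} and separate the three physical effects. Since $W(x)=\sum_i v_i|x_i|^2$, we have $\dot W=2\sum_{i}v_i x_i^\top\dot x_i$, and substituting \eqref{system_xinew} splits $\dot W$ into a coupling contribution $C(x):=2\sum_i v_i\gamma_i\alpha_i x_i^\top\sum_j a_{ij}(x_j-x_i)$, a descent contribution $-2\sum_i v_i\alpha_i x_i^\top(x_i-\Pi_{X_i}(x_i))$, and a perturbation contribution $2\sum_i v_i p_i(x_i)^\top x_i$. The first easy observation is that the last two contributions reassemble, term by term and \emph{without any inequality}, into $-\sum_i v_i H_i(x_i)$, using $|x_i|^2-x_i^\top\Pi_{X_i}(x_i)=x_i^\top(x_i-\Pi_{X_i}(x_i))$ and $p_i^\top x_i=x_i^\top p_i$. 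Hence the whole burden reduces to proving $C(x)\le -\tfrac{\gamma_s\alpha_s}{2}\lambda_2(Q)|x|^2_{\mathcal A_s}$.

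Next I write $C(x)$ in matrix form. Using $\sum_j a_{ij}(x_j-x_i)=-(Lx)_i$ block-wise through $L\otimes I_n$, and setting $\Lambda:=V\Gamma\Theta=\mathrm{diag}(v_i\gamma_i\alpha_i)$, symmetrisation gives $C(x)=-x^\top\big((\Lambda L+L^\top\Lambda)\otimes I_n\big)x$. I then split the diagonal gain matrix as $\Gamma\Theta=\gamma_s\alpha_s I_N+(\Gamma\Theta-\gamma_s\alpha_s I_N)$. The nominal part yields $-\gamma_s\alpha_s\,x^\top\big(Q\otimes I_n\big)x$ with $Q=L^\top V+VL$; by \cite[Lemma 1]{lazri:hal-04609468}, $Q$ is positive semidefinite with $\ker Q=\mathrm{span}\{\mathbf 1_N\}$, so this part is bounded above by $-\gamma_s\alpha_s\lambda_2(Q)|x|^2_{\mathcal A_s}$, i.e.\ exactly twice the target, leaving a spare half to absorb errors.

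The residual $R(x):=-x^\top\big((V(\Gamma\Theta-\gamma_s\alpha_s I_N)L+L^\top(\Gamma\Theta-\gamma_s\alpha_s I_N)V)\otimes I_n\big)x$ is where the membership $\Gamma,\Theta\in\mathcal D$ and the thresholds $\gamma_o,\alpha_o$ enter. Writing $\gamma_i=\gamma_s+\tilde\gamma_i$, $\alpha_i=\alpha_s+\tilde\alpha_i$ with $|\tilde\gamma_i|,|\tilde\alpha_i|\le d_e$ by definition of $\mathcal D$, the entries of $\Gamma\Theta-\gamma_s\alpha_s I_N$ are $\delta_i=\gamma_s\tilde\alpha_i+\alpha_s\tilde\gamma_i+\tilde\gamma_i\tilde\alpha_i$, so $\max_i|\delta_i|\le d_e(\gamma_s+\alpha_s)+d_e^2$. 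Decomposing $x=(\mathbf 1_N\otimes\bar x)+x_\perp$ with $x_\perp\perp\mathcal A_s$ and $|x_\perp|=|x|_{\mathcal A_s}$, and using $L(\mathbf 1_N\otimes\bar x)=0$, the residual produces a purely disagreement piece of size $O(\max_i|\delta_i|)\,|x|_{\mathcal A_s}^2$ and a coupling piece of size $O(\max_i|\delta_i|)\,|\bar x|\,|x|_{\mathcal A_s}$, the latter carrying only one centred factor.

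The disagreement piece is harmless: after dividing by $\gamma_s\alpha_s$ its coefficient is $O(d_e/\gamma_s+d_e/\alpha_s)$, which the thresholds $\gamma_s\ge\gamma_o$, $\alpha_s\ge\alpha_o$ push below the spare half of the nominal term. I expect the coupling piece to be the genuine obstacle, because $|\bar x|$ is \emph{not} controlled by $|x|_{\mathcal A_s}$. I would dispatch it by a Young split against a fraction of $\gamma_s\alpha_s\lambda_2(Q)|x|_{\mathcal A_s}^2$; the gain cancellation makes the leftover of the form $K|\bar x|^2$ with a constant $K=O(d_e^2/\lambda_2(Q))$ \emph{independent of the gains}, which is then dominated by the coercive $2\sum_i v_i\alpha_i|x_i|^2\gtrsim\alpha_s|\bar x|^2$ contained in $-\sum_i v_i H_i$ once $\alpha_o$ is large enough. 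Fixing $\gamma_o,\alpha_o$ to satisfy both thresholds at once closes the estimate and delivers \eqref{eqdotW}; all the delicacy sits in this last coupling term, the remainder being bookkeeping.
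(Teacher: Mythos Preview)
Your proposal has a genuine gap in the final step. You correctly observe that the residual produces a coupling piece of order $\max_i|\delta_i|\,|\bar x|\,|x|_{\mathcal A_s}$, but your plan to absorb the Young leftover $K|\bar x|^2$ by borrowing from the coercive part of $-\sum_i v_i H_i$ does \emph{not} prove \eqref{eqdotW}: the term $-\sum_i v_i H_i$ must appear intact on the right-hand side, so nothing can be siphoned from it. At best your argument yields $\dot W\le -\sum_i v_i H_i(x_i) + K|\bar x|^2 - \tfrac{\gamma_s\alpha_s}{2}\lambda_2(Q)|x|^2_{\mathcal A_s}$, which is strictly weaker than the claim. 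There is a second issue: your assertion that the leftover constant $K$ is gain-independent is incorrect. Since $\max_i|\delta_i|\le d_e(\gamma_s+\alpha_s)+d_e^2$, the Young split against a fraction of $\gamma_s\alpha_s\lambda_2(Q)|x|^2_{\mathcal A_s}$ leaves a coefficient proportional to $(\gamma_s+\alpha_s)^2/(\gamma_s\alpha_s)=\gamma_s/\alpha_s+2+\alpha_s/\gamma_s$, which is unbounded on the hypothesis set $\{\gamma_s\ge\gamma_o,\ \alpha_s\ge\alpha_o\}$; no fixed $\alpha_o$ can dominate it.

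The paper's proof proceeds differently and is designed to avoid any $|\bar x|^2$ term. Rather than lumping $\Gamma\Theta-\gamma_s\alpha_s I$ into a single diagonal perturbation, it expands $\Gamma\Theta=(\gamma_s I+\tilde\Gamma)(\alpha_s I+\tilde\Theta)$ and keeps the three residual pieces $\gamma_s\tilde\Theta$, $\alpha_s\tilde\Gamma$, $\tilde\Gamma\tilde\Theta$ separate, producing symmetric matrices $Q_1,Q_2,Q_3$ whose entries are uniformly bounded by constants depending only on $d_e$ and the graph. Each is then paired with its own sixth of $\gamma_s\alpha_s Q$: the pair $(\gamma_s\alpha_s Q/6,\ \gamma_s Q_1)$ is handled by choosing $\alpha_o$ large, the pair $(\gamma_s\alpha_s Q/6,\ \alpha_s Q_2)$ by choosing $\gamma_o$ large, and $(\gamma_s\alpha_s Q/6,\ Q_3)$ by choosing both large. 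The structural ingredient that makes this work is the paper's claim that $\mathbf 1_N\in\ker Q_i$, so that each $x^\top Q_i x$ is controlled by $|x|^2_{\mathcal A_s}$ alone; under that claim no $|\bar x|$ term ever appears and the $H_i$ contribution is left untouched, which is precisely what the stated inequality requires.
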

\begin{proof} 
For all $x \in \mathbb{R}^{nN}$, along the network dynamics, we obtain 
\begin{align*}
\nonumber 
 \dot  W (x) & = 
-  \sum_{i=1}^{N} v_i \alpha_i \left(|x_{i}|^2 - x_{i}^\top \Pi_{X_i}(x_i) \right) 
\\ & - x^\top [ L^\top \Gamma \Theta V + V \Gamma \Theta L]  x + 2 \sum_{i=1}^{N} v_i  x_{i}^\top p_{i}(x_i). 
\end{align*}
The result follows if we show the existence of $\alpha_o > 1$ and $\gamma_o > 0$ such that 
\begin{align} \label{eqkeyinterm}
x^\top [ L^\top \Gamma \Theta V + V \Gamma \Theta L]  x
\geq \frac{\gamma_s \alpha_s}{2} 
\lambda_2(Q) |x|^2_{\mathcal{A}_s}
\end{align}
for all $\Gamma$, $\Theta \in \mathcal{D}$ with $\gamma_s \geq \gamma_o$ and $\alpha_s \geq \alpha_o$. Since $\Gamma$,  
$\Theta \in \mathcal{D}$, we conclude that 
$$ \Gamma = \gamma_s I_N + \tilde{\Gamma}, \quad
\Theta = \alpha_s I_N + \tilde{\Theta},
$$
for some  
$\tilde{\Gamma}$,  $\tilde{\Theta} \in d_e \mathbb{B}^N$. As a result, 
\begin{align*}
 [L^\top \Gamma \Theta V + V \Gamma \Theta L] & = \gamma_s \alpha_s [L^\top  V + V L] 
\\& + 
\gamma_s  [L^\top \tilde{\Theta} V + V 
\tilde{\Theta} L]
\\ & + 
\alpha_s  [L^\top \tilde{\Gamma} V + V 
\tilde{\Gamma} L] 
\\ & + 
[L^\top \tilde{\Gamma} \tilde{\Theta} V + V \tilde{\Gamma} \tilde{\Theta} L]
\\ & = \frac{\gamma_s \alpha_s}{2} Q + \Pi,
\end{align*}
where 
\begin{align*} 
\Pi := & \frac{\gamma_s \alpha_s}{2} Q + 
\gamma_s  [L^\top \tilde{\Theta} V + V 
\tilde{\Theta} L] + 
\alpha_s  [L^\top \tilde{\Gamma} V + V 
\tilde{\Gamma} L]  
\\ &
+ 
[L^\top \tilde{\Gamma} \tilde{\Theta} V + V \tilde{\Gamma} \tilde{\Theta} L]. 
\end{align*}
 Using \cite[Lemma 1]{lazri:hal-04609468}, we conclude that $Q$ is positive semi-definite and that $\text{Ker}(Q) = \boldsymbol{1}_N$, which implies that 
\begin{align} \label{eqlb}  
 x^\top Q x & = \left[x -  \bold 1_{N}  \bold 1_{N}^\top x/N \right]^\top Q \left[x -  \bold 1_{N}  \bold 1_{N}^\top x/N \right] \\ \nonumber &  \geq  \lambda_2(Q) |x|_{\mathcal{A}_s}^2.  
\end{align}
Hence, it remains to show that $\Pi$ is positive semi-definite when $\alpha_s$ and $\gamma_s$ are sufficiently large. Indeed, we start re-expressing the matrix $\Pi$ as follows
\begin{align*} 
\Pi & := \gamma_s \left[ \alpha_s Q/6 + 
 Q_1 \right] + 
\alpha_s \left[ \gamma_s Q/6 +   Q_2  \right] + 
\gamma_s \alpha_s Q/6 + Q_3, 
\end{align*}
where 
\begin{align*} 
Q_1 & := [L^\top \tilde{\Theta} V + V 
\tilde{\Theta} L], ~~ Q_2 :=  [L^\top \tilde{\Gamma} V + V 
\tilde{\Gamma} L], 
\\
Q_3 & := [L^\top \tilde{\Gamma} \tilde{\Theta} V + V \tilde{\Gamma} \tilde{\Theta} L]. 
\end{align*}
Next, we observe that 
$$ \boldsymbol{1}_N \in \text{Ker}(Q_i) \qquad \forall i \in \{1,2,3\}. $$
Hence, 
\begin{equation}
\label{eqKey}
\begin{aligned}  
x^\top Q_i x & = \left[x -  \bold 1_{N}  \bold 1_{N}^\top x/N \right]^\top Q_i \left[x -  \bold 1_{N}  \bold 1_{N}^\top x/N \right] 
\\ & \leq \lambda_N(Q_i) |x|^2_{\mathcal{A}_s}
\qquad \forall i \in \{1,2,3\}.
\end{aligned}
\end{equation}
Finally, by letting 
$$ \bar{\lambda}_N(Q_i) = \max  \left\{ \lambda_N(Q_i) : \tilde{\Gamma},  \tilde{\Theta} \in d_e \mathbb{B}^N \right\} $$
and combining \eqref{eqKey} with \eqref{eqlb}, we obtain 
\begin{align*} 
\frac{x^\top \Pi x}{ |x|^2_{\mathcal{A}_s}} & \geq \gamma_s \left[ \alpha_s \lambda_2(Q)/6 - 
 \bar \lambda_N(Q_1) \right] 
 \\ &
 + 
\alpha_s \left[ \gamma_s \lambda_2(Q)/6 -   \bar \lambda_N(Q_2)  \right] 
\\ &
+ 
\gamma_s \alpha_s \lambda_2(Q)/6 - \bar \lambda_N(Q_3). 
\end{align*}
Hence,  for 
$$ \alpha_o \geq \max \left\{ 1, \frac{6 \bar \lambda_N(Q_1)}{\lambda_2(Q)} \right\} $$ 
and 
$$ \gamma_o \geq \max \left\{  \frac{6\bar \lambda_N(Q_2)}{\lambda_2(Q)}, \frac{6 \bar \lambda_N(Q_3)}{\lambda_2(Q)} \right\}, $$
we conclude that $\Pi$ is positive semi-definite for all $t \mapsto \gamma_s(t) \geq \gamma_o$ and  for all $t \mapsto \alpha_s(t) \geq \alpha_o$. 
\end{proof}
In the next lemma, we establish GUB and GUUB for the networked system  \eqref{system_xinew}.  
\begin{lemma} \label{lemGUUB}
The solutions to the networked system \eqref{system_xinew}  satisfying Assumptions~\ref{assGPH} and~\ref{Assbd} are GUB and GUUB.
\end{lemma}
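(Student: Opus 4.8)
The plan is to run a standard Lyapunov argument driven by the quadratic function $W$ in \eqref{eqLyapcandid} and the dissipation inequality \eqref{eqdotW} established in Lemma~\ref{propLyapder}. I first fix $\gamma_o$ and $\alpha_o$ as in Lemma~\ref{propLyapder} (enlarging $\alpha_o$ if needed, see below) and consider any $\Gamma,\Theta:\mathbb{R}_{\geq 0}\to\mathcal{D}$ with $\gamma_s(t)\geq\gamma_o$ and $\alpha_s(t)\geq\alpha_o$ for all $t\geq 0$. Along such trajectories \eqref{eqdotW} gives $\dot W(x)\leq -\sum_{i=1}^N v_i H_i(x_i)$, since the consensus term $-\tfrac{\gamma_s\alpha_s}{2}\lambda_2(Q)|x|^2_{\mathcal{A}_s}\leq 0$ may be discarded for an upper bound. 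Everything then reduces to showing that $\sum_i v_i H_i$ is bounded below and radially unbounded, \emph{uniformly} in the admissible gains.

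To that end I would lower-bound each $H_i$. Fix an arbitrary $x_i^\ast\in X_i$ (nonempty by \eqref{eqinters}) and apply \eqref{proj_ineq_1+} with $B=X_i$ and $A=\{x_i^\ast\}$; since $|x_i-\Pi_{X_i}(x_i)|=|x_i|_{X_i}$ this yields $x_i^\top(x_i-\Pi_{X_i}(x_i))\geq |x_i|_{X_i}^2-|x_i|_{X_i}\,|x_i^\ast|$, while Assumption~\ref{Assbd} gives $|p_i(x_i)^\top x_i|\leq \bar p\,|x_i|$. Every $D\in\mathcal D$ satisfies $D=d_s I_N+\tilde D$ with $\tilde D\in d_e\mathbb{B}^N$, so $\alpha_i=\alpha_s+\tilde\alpha_i\geq\alpha_o-d_e$; enlarging $\alpha_o$ so that $\underline{\alpha}:=\alpha_o-d_e>0$ secures a uniform positive floor on each descent gain. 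Combining these estimates,
\begin{align*}
H_i(x_i)\geq 2\underline{\alpha}\big(|x_i|_{X_i}^2-|x_i|_{X_i}\,|x_i^\ast|\big)-2\bar p\,|x_i|,
\end{align*}
a bound independent of $\Gamma,\Theta$. Because $X_i$ is bounded (so that $|\cdot|_{X_i}$ is coercive), the quadratic term dominates, giving $H_i(x_i)\to+\infty$ as $|x_i|\to\infty$ and $H_i$ bounded below; hence there exist $\psi\in\mathcal{K}_\infty$ and $c>0$, both uniform in the gains, with $\sum_i v_i H_i(x_i)\geq \psi(|x|)-c$ (the sandwiching of such a function between class-$\mathcal{K}_\infty$ bounds is standard, cf.\ \cite{kellett2014compendium}).

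I would then conclude as usual. The estimate $\dot W(x)\leq -\psi(|x|)+c$, together with $\underline v\,|x|^2\leq W(x)\leq \bar v\,|x|^2$ for $\underline v=\min_i v_i>0$ and $\bar v=\max_i v_i$, makes $\dot W<0$ outside the fixed ball $\{|x|\leq \psi^{-1}(c)\}$. A comparison/level-set argument then delivers, on one hand, GUB: for each $r_o$ the smallest sublevel set of $W$ containing $r_o\mathbb{B}$ is forward invariant, producing a radius $R=R(r_o,\gamma_o,\alpha_o)\geq r_o$; and, on the other hand, GUUB: every trajectory enters, after a time $T=T(r_o,\gamma_o,\alpha_o)$, a fixed ball of radius $r=r(\gamma_o,\alpha_o)$ fixed by $\psi^{-1}(c)$ and the ratio $\bar v/\underline v$. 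Since all constants depend on the gains only through the lower bounds $\gamma_o,\alpha_o$, the required uniformity in $\Gamma,\Theta\in\mathcal D$ follows.

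The main obstacle is the coercivity and gain-uniformity of $\sum_i v_i H_i$. The perturbation contributes only the harmless linear term $2\bar p\,|x_i|$, so the real point is guaranteeing that the projection/descent term genuinely dominates it; this is exactly where convexity (through \eqref{proj_ineq_1+}) and the uniform floor $\underline{\alpha}>0$ on the descent gains are indispensable. Should some $X_i$ be unbounded, $H_i$ ceases to be coercive along directions tangent to $X_i$, and boundedness there must instead be recovered from the discarded consensus term $-\tfrac{\gamma_s\alpha_s}{2}\lambda_2(Q)|x|^2_{\mathcal{A}_s}$; reconciling these two dissipation mechanisms is the delicate part, which I would handle, as in \cite{lazri:hal-04609468}, by exploiting the intersection geometry to restore coercivity of $\sum_i v_i H_i$ along the consensus subspace.
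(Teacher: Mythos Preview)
Your route differs from the paper's. The paper keeps \emph{both} dissipation mechanisms in \eqref{eqdotW} and runs a two-region argument: the consensus term $-\tfrac{\gamma_s\alpha_s}{2}\lambda_2(Q)|x|^2_{\mathcal{A}_s}$ alone forces $\dot W\leq-\epsilon$ outside a ``near-consensus'' tube $\mathcal{C}=\{|x|_{\mathcal{A}_s}\leq R_c\}$, and then \emph{inside} $\mathcal{C}$ but outside a large ball one argues that every individual $|x_i|$ must be large (because the $x_i$'s are mutually close), so that a per-agent threshold $\rho_i$ with $H_i(x_i)\geq|x_i|^2$ for $|x_i|\geq\rho_i$ takes over. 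You instead discard the consensus term and extract GUB/GUUB directly from a uniform coercive lower bound on $\sum_i v_i H_i$. Under the same standing hypothesis that each $X_i$ is bounded (which the paper also needs, implicitly, for its $\rho_i$ to exist and for its constant $C$ to be finite), your argument is shorter and equally valid in spirit.

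There is, however, a genuine slip in your gain-uniformity step. From $A_i:=x_i^\top(x_i-\Pi_{X_i}(x_i))\geq B_i:=|x_i|^2_{X_i}-|x_i|_{X_i}|x_i^\ast|$ together with $\alpha_i\geq\underline\alpha>0$ you \emph{cannot} conclude $\alpha_i A_i\geq\underline\alpha B_i$: the substitution $\alpha_i\mapsto\underline\alpha$ goes the wrong way whenever $A_i<0$, which does occur (take $X_i=\{(2,0)\}$, $x_i=(1,0)$, giving $A_i=-1$), and since $\alpha_i$ carries no upper bound this destroys the claimed gain-independent lower bound on $H_i$. The fix is immediate: pick $x^\ast\in X_o$ (available by \eqref{eqinters}) and replace \eqref{eqLyapcandid} by $\widetilde W(x)=\sum_i v_i|x_i-x^\ast|^2$. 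The consensus term is translation-invariant, and the analogue of $A_i$ becomes $(x_i-x^\ast)^\top(x_i-\Pi_{X_i}(x_i))\geq|x_i|^2_{X_i}\geq 0$ by \eqref{proj_ineq_1+}, after which replacing $\alpha_i$ by $\underline\alpha$ is legitimate and the rest of your argument goes through verbatim. (The paper's own proof hides the same issue---its constant $C:=\sup_{i}\sup_{x_i,\alpha_i>1}[-H_i(x_i)]$ is finite only under the same tacit normalization---so this is a shared subtlety rather than a defect peculiar to your route.)
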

\begin{proof}
 We consider the $\gamma_o > 0$ and $\alpha_o > 1$ introduced in Lemma ~\ref{propLyapder}.  Furthermore,  we let $r_{o} > 0$ be arbitrarily fixed such that $|x(0)| \leq r_{o}$.   
\\
\noindent 1) \underline{ GUUB:}   
Using \eqref{eqdotW} in Lemma~\ref{propLyapder} and Assumption~\ref{Assbd}, we  conclude that 
\begin{align} \label{379}
\hspace{-0.3cm} \dot W (x) \leq C -  \frac{\gamma_o \alpha_o}{2} \lambda_2(Q) |x|_{\mathcal{A}_s}^2  \quad \forall x \in \mathbb{R}^{n}, 
\end{align}
where 
$$ C := \sup_{i \in \{1,...,N\}} \sup_{x_i \in \mathbb{R}^n, \alpha_i > 1} \left[- H_i(x_i) \right]. $$
As a consequence,
given $\epsilon >0$,
we conclude that
\begin{align} \label{257}
 \dot W  (x) &  \leq -\epsilon   \qquad \forall x \notin \mathcal{C}, 
\end{align}
where the set $\mathcal{C} \subset  \mathbb{R}^{n}$ is defined as 
\begin{align*}
&\mathcal C := \left\{ x \in \mathbb{R}^{nN} : |x|_{\mathcal{A}_s} \leq  R_c \right \}, ~~
 R_c^2 := \frac{2(C+\epsilon)}{\gamma_o \alpha_o \lambda_2(Q)}.
\end{align*}
Next, we let 
$\bar \rho := \displaystyle\max_{\hspace{-1em}i \in \{1,2,...,N\}} \rho_{i}$, 
where each $\rho_i > 0$ is such that 
$$ H_i(x_i) \geq |x_i|^2  \qquad \forall |x_i| \geq \rho_i.  $$
Also, we let 
$$  \beta := R_c + N \left[  \bar{\rho} + R_c \right].  $$ 
We will establish the existence of $\Psi : \mathbb{R}^{nN} \rightarrow \mathbb{R}$ that is continuous and positive on 
$\mathcal{C} \backslash \beta \mathbb{B}$ such that
\begin{align} \label{eqPsi}
 \dot W (x) \leq  - \Psi (x)  \qquad \forall x \in \mathcal{C} \backslash \beta  \mathbb{B}.
\end{align}
To do so,  we start noting that,  for all $x \in \mathcal{C} \backslash \beta \mathbb{B}$,  
we have  
\begin{align} \label{eqbounds} 
|x| > \beta \quad \text{and} \quad |x|_{\mathcal{A}_s} \leq R_c.   
\end{align}
We will show that for the considered choice of the parameter $\beta$,  for all $x \in \mathcal{C} \backslash \beta \mathbb{B}$,  we have 
\begin{align} \label{eqboundimport} 
\hspace{-0.2cm} |x_{i}| >   \bar{\rho}  \qquad \forall i \in \mathcal{V}.  \end{align}
Indeed, using the identities
\begin{align*}
x  & =   (\bold 1_{N} \otimes I_n) (  (\bold 1_{N} \otimes I_n)^\top  x)/N  
\\ &
+ \left[ x -   (\bold 1_{N} \otimes I_n) (  (\bold 1_{N} \otimes I_n)^\top  x)/N \right],  
\\
 |x|_{\mathcal{A}_s} & = |x -   (\bold 1_{N} \otimes I_n) (  (\bold 1_{N} \otimes I_n)^\top  x)/N|,  
 \end{align*}  
we conclude that
\begin{align} \label{eqbounds1}
| x | \leq |x|_{\mathcal{A}_s}  +  |   (\bold 1_{N} \otimes I_n)^\top  x|. 
\end{align}
Now,  combining \eqref{eqbounds} and \eqref{eqbounds1}, we conclude that,  for all $x \in \mathcal{C} \backslash \beta \mathbb{B}$,  we have 
\begin{equation}
\label{eqbounds2}
\begin{aligned} 
\beta & < | x | \leq |x|_{\mathcal{A}_s}  + |   (\bold 1_{N} \otimes I_n)^\top  x| 
\\ & \leq  R_c + |   (\bold 1_{N} \otimes I_n)^\top  x|. 
\end{aligned}
\end{equation}
As a result, we conclude that 
\begin{align*}
 |   (\bold 1_{N} \otimes I_n)^\top  x|/N >  (\beta - R_c)/ N  \qquad \forall  x \in \mathcal{C} \backslash \beta \mathbb{B}.
\end{align*} 
Next, for all $i \in \mathcal{V}$, we use the fact that  
\begin{align*} 
x_{i} & =   (\bold 1_{N} \otimes I_n)^\top  x/N  + \left( x_{i}  -   (\bold 1_{N} \otimes I_n)^\top  x / N \right) 
\\ & \geq  |  (\bold 1_{N} \otimes I_n)^\top  x| / N - |\left( x_{i}  -   (\bold 1_{N} \otimes I_n)^\top  x/N \right)|
 \end{align*}
to conclude that, for all $x \in \mathcal{C} \backslash \beta \mathbb{B}$,  we have 
\begin{align*}
|x_{i}| & > (\beta - R_c)/ N -R_c =   \bar{\rho} \qquad \forall i \in \mathcal{V}.   
\end{align*}
Now,  since 
\begin{align}
\nonumber 
- \sum_{i=1}^{N} v_{ i} H_{i} (x_{i})   \leq -   \sum_{i=1}^{N} v_{i} |x_{i}|^2  \leq  0 \quad  \forall x \in \mathcal{C} \backslash \beta \mathbb{B}.
\end{align}
As a result,   in view of \eqref{eqdotW},  we conclude that   
\begin{align*}
\dot W (x) & \leq -\sum_{i=1}^{N} v_{i}  |x_{i}|^2 = - \Psi (x) \quad \forall x \in \mathcal{C} \backslash \beta \mathbb{B}. 
\end{align*}
Hence $\Psi$ is continuous and positive on $\mathcal{C} \backslash \beta \mathbb{B}$. 
At this point,  we conclude that
$$
 \dot W (x) \leq  - \min \{ \Psi (x),  \epsilon \} < 0 \qquad \forall x \in \mathbb{R}^{nN}  \backslash
 \beta \mathbb{B}.
$$
The latter is enough to conclude global attractivity and forward invariance of the set 
\begin{align*} 
\mathcal{S}_{\sigma} & := \{ x \in \mathbb{R}^{nN} : W(x) \leq \sigma \}, ~ \sigma  :=  \max \{ W(y) : y \in \beta \mathbb{B} \}.  
\end{align*}
Note that  $W(x) \leq \sigma$ holds when $|x| \leq r := \left[ \min_i \{ v_{i} \} \right]^{-1} \sigma$, defining the ultimate bound. 
Next, we compute an upperbound, $T (r_o,\gamma_o,\alpha_o)$,  on the time that the solutions starting from the set $r_o \mathbb{B}$ 
would take to reach the compact set  $\beta \mathbb{B} \subset \mathcal{S}_{r}$.  For this purpose,  we assume,  without loss of generality,  that $r_o \geq \beta$ and we define
$$ \epsilon_{o} := \min \{ \min \{ \Psi (x),  \epsilon \}  : |x| \geq \beta, ~ x \in \mathcal{S}_{\sigma_{o}} \} > 0,  $$
where 
$\mathcal{S}_{\sigma_{o}}  := \{ x  \in \mathbb{R}^{nN} 
: W(x) \leq \sigma_{o}
\}$ and $\sigma_{o}   := \max \{ W(y) : y \in r_o \mathbb{B} \}$. 
 Therefore, along every solution $t \mapsto x(t)$ starting from 
$x(0) \in r_o \mathbb{B} \backslash \beta \mathbb{B}$, we have 
$\dot W (x(t)) \leq  - \epsilon_{o}$,  up to the earliest time when $x$ reaches $\mathbb{B}_{\beta}$.  For any earlier time, 
we have 
\begin{equation}\label{338}
W(x(t)) \leq  - \epsilon_{o} t + W(x(0)),
\end{equation} 
so we can take $T(r_o,\gamma_o, \alpha_o) = \sigma_o/\epsilon_{o}$. 
Clearly, $T$ depends only on $(r_o,\gamma_o,\alpha_o)$ and $r$ depends only on 
$\gamma_o$ and $\alpha_o$.  
\\
\noindent 2)  \underline{{\it  GUB}: } We already know that 
$$ |x(t)| \leq r(\gamma_o, \alpha_o) \qquad \forall t \geq T(r_o, \gamma_o,\alpha_o).  $$
Hence,  it remains to upperbound the norm of the solution starting from $r_o \mathbb{B}$ over the interval $[0,T]$.  To this end, we reconsider \eqref{379},  which allows us to write 
\begin{align*} 
\hspace{-0.3cm} \dot W (x) &  \leq C  
\qquad \forall x \in \mathbb{R}^{n}. 
\end{align*}
As a consequence, on the interval  $[0, T(r_o,\gamma_o, \alpha_o)]$,  we have 
\begin{align*} 
W(x(t)) & \leq C + W(x(0))  
 \leq  C  + \sigma_o,
\end{align*}  
Hence, $\forall t \geq 0$,  we have 
\begin{align*}
 |x(t)|^2 \leq  R^2 := \left[ \min_i \{ v_{i} \} \right]^{-1} \left( C + \sigma_o + r \right).   
 \end{align*}
\end{proof}

We next establish global practical synchronization.
\begin{lemma} \label{lemGPracS}
Consider the networked system \eqref{system_xinew} such that Assumptions~\ref{assGPH} and~\ref{Assbd} and the GUB and the GUUB properties hold. Then, the global practical synchronization property in (IV) also holds. 
\end{lemma}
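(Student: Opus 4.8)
The plan is to recast practical synchronization as an ISS-type estimate for a weighted disagreement Lyapunov function, in which the consensus coupling supplies the restoring term while the gradient-descent and projection-error terms are treated as bounded disturbances. With $v \in \mathrm{Ker}(L^\top)$ as in the theorem, set $\bar{x}_v := \sum_{i=1}^N v_i x_i$ and the disagreement $e_i := x_i - \bar{x}_v$; then $\sum_i v_i e_i = 0$ and $e_i - e_j = x_i - x_j$, so that $|e|_{\mathcal{A}_s} = |x|_{\mathcal{A}_s}$. I would work with
$$V_s(x) := \tfrac12 \sum_{i=1}^N v_i |x_i - \bar{x}_v|^2,$$
which, since every $v_i > 0$, satisfies $c_1 |x|_{\mathcal{A}_s}^2 \le V_s(x) \le c_2 |x|_{\mathcal{A}_s}^2$ for constants $c_1, c_2 > 0$ depending only on $v$.

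Differentiating along \eqref{system_xinew} and using $L\boldsymbol{1}_N = 0$ (so $(Lx)_i = (Le)_i$), the consensus contribution equals $-\tfrac12 e^\top \big([L^\top \Gamma\Theta V + V\Gamma\Theta L]\otimes I_n\big) e$; applying the key inequality \eqref{eqkeyinterm} of Lemma~\ref{propLyapder} to $e$ componentwise and using $|e|_{\mathcal{A}_s} = |x|_{\mathcal{A}_s}$ bounds it above by $-\tfrac{\gamma_s\alpha_s}{4}\lambda_2(Q)|x|_{\mathcal{A}_s}^2$. The remaining terms are the descent disturbance $-\sum_i v_i \alpha_i e_i^\top(x_i - \Pi_{X_i}(x_i))$ and the perturbation $\sum_i v_i e_i^\top p_i(x_i)$. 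Writing $D(t) := \max_i |x_i(t)|_{X_i}$, and using $\alpha_i \le \alpha_s + d_e$ on $\mathcal{D}$, Assumption~\ref{Assbd}, and $\sum_i v_i|e_i| \le \sqrt{2V_s}$, I obtain
$$\dot V_s \le -\tfrac{\gamma_s\alpha_s}{4}\lambda_2(Q)\,|x|_{\mathcal{A}_s}^2 + \big[(\alpha_s + d_e)D(t) + \bar{p}\big]\sqrt{2V_s}.$$
Setting $u := \sqrt{V_s}$ reduces this to a scalar linear comparison $\dot u \le -\tfrac{a}{2}u + \tfrac{b}{2}$ with $a := \tfrac{\gamma_s\alpha_s\lambda_2(Q)}{4c_2}$ and $b$ proportional to $(\alpha_s + d_e)D(t) + \bar{p}$.

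The decisive point is that the synchronizing rate $a$ scales like $\gamma_s\alpha_s$ whereas the disturbance scales like $\alpha_s D$, so the steady gain $b/a$ scales like $D/\gamma_s$ — the factor $\alpha_s$ cancels, which is precisely why precision is governed by $\gamma^*$ and not by $\alpha^*$. Here the lemma's hypotheses enter: GUUB gives $D(t) \le \bar{D}$ for $t \ge T$ with $\bar{D}$ depending only on $(\gamma_o,\alpha_o)$, while GUB gives $D(t) \le D_X(|x(0)|)$ on $[0,T]$. Using variation of constants together with the identity $\int_0^t e^{-\frac12\int_\tau^t a}\tfrac{a(\tau)}{2}\,d\tau \le 1$, the steady part $(\tau \ge T)$ contributes at most $M_\infty \sim \bar{D}/\gamma^*$, independent of $x(0)$; squaring and using the norm equivalence turns it into the term $\varepsilon/2$. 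The homogeneous term $e^{-\frac12\int_0^t a}u(0)$, together with $\int_0^t a \ge \tfrac{\gamma_o\lambda_2(Q)}{4c_2}\alpha^* t$ (so the decay rate is fixed, independent of $\gamma^*$), yields a $\mathcal{KL}$ bound $\beta_1(|x(0)|_{\mathcal{A}_s}, \alpha^* t)$; and the transient part $(\tau < T)$ yields a decaying, $x(0)$-dependent term scaled by $1/\gamma^*$, giving $\beta_2(|x(0)|, \alpha^* t)/\gamma^*$ once its value at $x(0) = 0$ (itself bounded and $\to 0$ as $\gamma^* \to \infty$) is absorbed into $\varepsilon/2$, so that the residual vanishes as $|x(0)| \to 0$.

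Finally I would fix $\beta_1, \beta_2$ and $\alpha^* > \alpha_o$ (so that \eqref{eqkeyinterm} and the GUB/GUUB bounds are available) from this analysis, and then, given $\varepsilon$, enlarge $\gamma^*(\varepsilon) > \gamma_o$ until all $x(0)$-independent steady contributions drop below $\varepsilon/2$. I expect the main obstacle to lie not in the Lyapunov computation but in the last bookkeeping step: exhibiting the $\alpha_s$-cancellation cleanly for time-varying, non-uniform gains (via $\Gamma = \gamma_s I_N + \tilde\Gamma$, $\Theta = \alpha_s I_N + \tilde\Theta$ with $\tilde\Gamma, \tilde\Theta \in d_e\mathbb{B}^N$ and the reuse of \eqref{eqkeyinterm}), and repackaging the GUUB-steady and GUB-transient contributions into the exact form $\tfrac{\varepsilon}{2} + \beta_1 + \beta_2/\gamma^*$ with genuine class-$\mathcal{KL}$ functions vanishing at the origin.
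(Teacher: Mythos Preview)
Your approach is correct and takes a genuinely different route from the paper's. The paper does \emph{not} work with the weighted disagreement $V_s=\tfrac12\sum_i v_i|x_i-\bar x_v|^2$; instead it (i) rescales time by $\tau(t)=\int_0^t\alpha_s$, (ii) performs a Jordan-block decomposition $L=V\,\mathrm{blkdiag}(0,\Lambda)\,V^{-1}$ and the associated change of coordinates $x\mapsto(x_m,e_v)$ with $e_v=(U^\dagger\otimes I_n)x$, and (iii) uses $V(e_v)=e_v^\top H e_v$ with $H$ from the Lyapunov equation $H\Lambda+\Lambda^\top H\ge I$ to obtain $\tfrac{dV}{d\tau}\le -\tfrac{\gamma_s}{2\lambda_M}V+2\lambda_M\,|G_e||e_v|$, where the disturbance $G_e$ is uniformly bounded in $\alpha_s$ because the time rescaling has already absorbed it. The GUB/GUUB bounds then enter exactly as you plan, and $\gamma^*$ is chosen so that $4\lambda_M^2\hat G/\gamma^*\le\varepsilon$.

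What each approach buys: the paper's time rescaling makes the $\alpha_s$-cancellation automatic (the perturbation in the $\tau$-scale simply does not carry a factor $\alpha_s$), and the Jordan coordinates give a clean linear error dynamics $-\gamma_s(\Lambda+U^\dagger\tilde L U)e_v$ to which a standard Lyapunov equation applies; the price is the somewhat heavy change of variables and the need to control the perturbed left-eigenvector $\hat v=(I+\tilde\Theta/\alpha_s)^{-1}v$. Your route is more elementary and economical: it stays in the original coordinates and reuses inequality~\eqref{eqkeyinterm} from Lemma~\ref{propLyapder} directly, so no Jordan form or Lyapunov equation is needed. The cost is exactly the bookkeeping you flag---you must exhibit the $\alpha_s$-cancellation by hand via $b/a\lesssim D/\gamma_s$ (writing $(\alpha_s+d_e)/\alpha_s\le 1+d_e/\alpha_o$ and $\bar p/\alpha_s\le\bar p/\alpha_o$), and you must package the transient $[0,T]$ integral into a genuine $\mathcal{KL}$ term; the paper handles the latter by the crude but effective trick of multiplying the constant transient bound by $e^{\gamma_o\tau(T)/(2\lambda_M)}e^{-\gamma_o\tau(t)/(2\lambda_M)}$, which you can mimic. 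Both arguments share the same minor wrinkle that the resulting $\beta_2(\cdot,t)$ is only $\mathcal{KL}$ after the $|x(0)|$-independent residual is folded into the $\varepsilon/2$ term, as you already note.
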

\begin{proof}
We start by 
expressing the network \eqref{system_xinew}
in the compact form 
\begin{align} \label{eqNetComp}
\dot{x} =  (\Theta \otimes I_n) f(x)  - (\Gamma \Theta L \otimes I_n) x + p(x), 
\end{align}
where $$ f(x) := \left[ f_1(x_1)^\top ~ ... ~ f_N(x_N)^\top  \right]^\top, $$
and $f_i(x_i) := - \left[x_i - \Pi_{X_i}(x_i) \right]$ for all $i \in \mathcal{V}$.
Furthermore, having  $\Gamma$,  
$\Theta \in \mathcal{D}$, we write  
$$ \Gamma = \gamma_s I_N + \tilde{\Gamma}, ~
\Theta = \alpha_s I_N + \tilde{\Theta}, ~~ \text{for some} ~  
\tilde{\Gamma}, ~ \tilde{\Theta} \in d_e \mathbb{B}^N.
$$
As a result, the networked system \eqref{eqNetComp}
is expressed as
\begin{align} 
\frac{\dot{x}}{\alpha_s} & =  f(x) 
-  \gamma_s \left[ \left(L + \frac{\tilde{\Theta}}{\alpha_s} L \right) \otimes I_n \right] x
+
\left( \frac{\tilde{\Theta}}{\alpha_s}  \otimes I_n \right) f(x) 
\nonumber  \\ & 
- 
 (\tilde{\Gamma} L \otimes I_n) x
-
\left( \frac{\tilde{\Theta} \tilde{\Gamma}}{\alpha_s} L  \otimes I_n \right) x
+ \frac{p(x)}{\alpha_s}. 
\label{eqNetComp1}
\end{align}
At this point, we let 
$ \tilde{L} :=  \frac{\tilde{\Theta}}{\alpha_s} L $
and 
\begin{align*}
F(x) & := f(x) +
\left( \frac{\tilde{\Theta}}{\alpha_s}  \otimes I_n \right) f(x) 
 \\ & 
- 
 (\tilde{\Gamma} L \otimes I_n) x
-
\left( \frac{\tilde{\Theta} \tilde{\Gamma}}{\alpha_s} L \otimes I_n \right) x
+ \frac{p(x)}{\alpha_s}, 
\end{align*}
so that the network's dynamics takes the form 
\begin{equation}
\label{eqNetComp2}
\begin{aligned} 
\frac{\dot{x}}{\alpha_s} & =  F(x) 
-  \gamma_s \left[ (L + \tilde{L}) \otimes I_n \right] x. 
\end{aligned}
\end{equation}
Note that $F$ is uniformly bounded in $\tilde{\Theta}$, $\tilde{\Gamma} \in d_e \mathbb{B}^N$, $\alpha_s \geq \alpha_o > 1$, and $|p| \leq \bar{p}$. Hence, we omit its dependence on these variables. 
Finally, in the new time scale 
$$ \tau(t) := \int^t_{0} \alpha_s(r) dr,  $$
we obtain 
\begin{equation}
\label{eqNetComp3}
\begin{aligned} 
\frac{d x}{d \tau} & =  F(x) 
-  \gamma_s \left[ (L + \tilde{L}) \otimes I_n \right] x. 
\end{aligned}
\end{equation}
At this point, we let 
$$  \hat{v} := \left(I_N + \tilde{\Theta}/\alpha_s \right)^{-1} v.  $$
Note that $\hat{v}^\top (L + \tilde{L}) = 0$ and, for 
$\alpha^*$ sufficiently large, we conclude that 
$$ \hat{v}(t) \in \mathbb{R}^N_{> 0}  ~~ \forall t \geq 0 \quad  \text{if}  \quad \alpha_s(t) \geq \alpha^*  ~~ \forall t \geq 0. $$  
Since $\lambda_1(L) = 0$ has multiplicity one, the matrix $L$ admits the Jordan-block decomposition
$$
L = V 
\begin{bmatrix}
0 & 0 
\\ 
0 & \Lambda
\end{bmatrix} 
V^{-1},
$$
where $\Lambda \in \mathbb{R}^{(N-1) \times (N-1)}$ is composed by the Jordan blocks corresponding to the non-null eigenvalues of $L$. 
Moreover, $V$ and $V^{-1}$ are given by
$$
V := \begin{bmatrix}
    \bold 1_{N} & U  \end{bmatrix}, 
    \qquad V^{-1} := \begin{bmatrix}
    v^\top 
    \\ U^\dagger
\end{bmatrix},
$$ 
where $U \in \mathbb{R}^{N \times N - 1}$ and $U^\dagger \in \mathbb{R}^{N -1 \times N}$ are such that
\begin{align*}
v^\top \bold 1_{N} = 1, \quad 
v^\top U = 0, 
\quad 
U^\dagger \bold{1}_N = 0, \quad 
U^\dagger U = I_{N - 1}.
\end{align*}
Now, we introduce the new coordinates
$$
\begin{bmatrix}
    x_m  \\ e_v 
\end{bmatrix} := \begin{bmatrix}
    (\hat{v}^\top \otimes I_n)  x \\ (U^\dagger \otimes I_n)  x 
\end{bmatrix} := \left( \hat{V}^{-1} \otimes I_n \right) x.
$$
Using the fact that 
\begin{align*}
    V \hat{V}^{-1} & = \bold 1_{N} \hat{v}^\top + U U^\dagger  = I_{N} + \bold 1_{N} 
    (\hat{v} - v)^\top
    \\ & = 
    I_N - \bold 1_{N} (\tilde{\Theta}/\alpha_s) \hat{v}^\top,
    \end{align*}  
we conclude that the state vector $x$ can be expressed as 
\begin{equation}\label{589}
x =  \left( \left(\bold{1}_{N} + \bold{1}_{N} (\tilde{\Theta}/\alpha_s)  \right) \otimes I_n \right)  x_m + (U \otimes I_n) e_v.
\end{equation}
Note that $x_m = (\hat{v}^\top \otimes I_n) x$ corresponds to a 
time-dependent \textit{weighted average} of the network's states.  
As a result, using the new state variable $\bar{x} := (x_m,e_v)$, the network dynamics will consist of three interconnected components:
\begin{subequations}
\label{eqeqeq}
\begin{align}
\frac{d x_m}{d \tau} & = F_m(x_m, e_v), \label{xmdyn} \\
\label{evdyn} 
\frac{d e_v}{d \tau} & = - \gamma_s \left[ (\Lambda + U^\dagger \tilde{L} U ) \otimes I_{n} \right] e_v + G_e(x_m, e_v), 
\end{align}
\end{subequations}
where 
\begin{align*}
& F_m (x_m,e_v) :=  (\hat{v}^\top \otimes I_n) 
\\ &
F \left(
\left( \left(\bold{1}_{N} + \bold{1}_{N} (\tilde{\Theta}/\alpha_s)  \right) \otimes I_n \right)  x_m + (U \otimes I_n) e_v \right) 
+ \\ &  \left( \frac{d \hat{v}}{d \tau} \otimes I_n \right) 
\\ &
\left(
\left( \left(\bold{1}_{N} + \bold{1}_{N} (\tilde{\Theta}/\alpha_s)  \right) \otimes I_n \right)  x_m + (U \otimes I_n) e_v \right), \\
& G_e(x_m,e_v) := 
 (U^\dagger \otimes I_n) \\ & 
 \left[ F\left(
\left( \left(\bold{1}_{N} + \bold{1}_{N} (\tilde{\Theta}/\alpha_s)  \right) \otimes I_n \right)  x_m + (U \otimes I_n) e_v \right) \right].
\end{align*}
In view of \eqref{589}, when $e_v= 0$, we conclude that 
$$x = \left( \left(\bold{1}_{N} + \bold{1}_{N} (\tilde{\Theta}/\alpha_s)  \right) \otimes I_n \right)  x_m, $$ 
thus, 
\begin{equation}
\label{eqAequiv}
\begin{aligned} 
\mathcal{A}_v & := \{ (x_m,e_v) \in \mathbb{R}^{n N} : e_v=0 \} 
\\ &
= \{ x \in \mathbb{R}^{nN} : x_1 = x_2 = ... = x_N \}
= \mathcal{A}_s. 
\end{aligned} 
\end{equation}
In view of \eqref{eqAequiv}, we propose to analyze global practical synchronization 
by analyzing global practical asymptotic stability (GpAS) of the set $\mathcal{A}_v$ for \eqref{eqeqeq}. 
\begin{itemize}
\item GpAS of 
$\mathcal{A}_v$. 
There exist 
$\beta_1$, $\beta_2 \in \mathcal{K L}$ and $\alpha^* > \alpha_o > 1$ such that, given $\varepsilon > 0$, there exist  $\gamma^*(\varepsilon) > \gamma_o > 0$  such
that, for each $\Gamma, \Theta: \mathbb{D}^N \rightarrow  \mathcal{D}$ with 
$\gamma_s(t) \geq \gamma^*$, $\alpha_s(t) \geq \alpha^*$ for all $t \geq 0$,  
and for each $(\bar{x}(0),t) \in \mathbb{R}^{nN} \times \mathbb{R}_{\geq 0}$, we have  
\begin{equation}
\label{eqpropty1C}
\begin{aligned}  
 |\bar{x}(t)|^2_{\mathcal{A}_v} & \leq  \varepsilon + \beta_1(|\bar{x}(0)|_{\mathcal{A}_v}, \alpha^* t)
 \\ &  + \frac{\beta_2(|\bar{x}(0)|, \alpha^* t)}
{\gamma^*}
 \qquad  \forall t \geq 0.
\end{aligned}
\end{equation}
 \end{itemize}
 Given $\gamma_o > 0$,  $\alpha_o > 1$, and the range of initial conditions $r_o \mathbb{B}$, for some $r_o >0$.  When both  GUUB and GUB hold for some $r(\gamma_o, \alpha_o)$, $T(r_o,\gamma_o, \alpha_o)$, and $R(r_o,\gamma_o,\alpha_o)$,  it follows that  
\begin{align} 
\begin{matrix}
|e_v(t)| \leq r_e(\gamma_o,\alpha_o) 
\\
|x_m(t)| \leq r_m(\gamma_o,\alpha_o)
\end{matrix}
 \qquad \forall t \geq T, \label{eqbound1}
\\
\begin{matrix}
|e_v(t)|  \leq R_e(r_o,\gamma_o,\alpha_o) \\ 
|x_m(t)| \leq R_m(r_o,\gamma_o,\alpha_o)
 \end{matrix}
 \qquad \forall t \geq 0, \label{eqbound2}
\end{align}
where 
\begin{align*}
r_e & := |U^\dagger| r, ~
 r_m := |\hat{v}|_\infty r,
~
R_e  := |U^\dagger| R, ~ 
R_m := |\hat{v}|_\infty R.
\end{align*}
Next, since the eigenvalues of $\Lambda$ have positive real parts, we conclude that the origin for the linear system 
$\dot e_v = - (\Lambda \otimes I_n) e_v$ is globally exponentially stable. As a result,  there exists a positive definite matrix $H \in \mathbb{R}^{(N-1)\times (N - 1)}$ such that 
$$ H \Lambda + \Lambda H^\top \geq I_{N-1}. $$
At this point, we let $\alpha^*$ be sufficiently large such that 
$$ 2 H U^\dagger \tilde{L} U \leq I_N/2 \qquad \forall \alpha_s \geq  \alpha^*.  $$
As a result, using the Lyapunov function
$$ V(e_v) :=  e_v^\top H e_v, $$ 
which verifies  
$$
\lambda_{m} |e_v|^2 \leq V(e_v) \leq \lambda_{M} |e_v|^2, 
$$
for  
$(\lambda_m, \lambda_M)  :=   (\lambda_{1} (H),  \lambda_{N -1} (H))$,
we obtain along \eqref{evdyn}:
\begin{equation*}
\begin{aligned} 
& \frac{d V(e_v)}{d \tau}  \leq 
- (\gamma_s/2) |e_v|^2 + 2  \lambda_{M} |G_e(x_m,e_v)| |e_v|.
\end{aligned}
\end{equation*}
In view of the GUB property and \eqref{eqbound2}, we let 
\begin{equation*}
\begin{aligned}
& \bar{G}(r_o,\gamma_o, \alpha_o) \\ & 
:= 
\sup \left\{ |G_e(x_m,e_v)| |e_v| :
\begin{matrix}
|x_m| \leq R_m(r_o,\gamma_o,\alpha_o)
\\
|e_v| \leq R_e(r_o, \gamma_o, \alpha_o)
\end{matrix}
\right\}. 
\end{aligned}
\end{equation*}
As a result, we obtain  
\begin{equation*}
\begin{aligned} 
\frac{d V(e_v)}{d \tau} &  \leq  - \frac{\gamma_s}{2 
  \lambda_{M}} V(e_v) + 2 \lambda_M \bar{G}. 
\end{aligned}
\end{equation*}
Hence, on the interval $[0,T]$, we have 
\begin{align*}
 V(e_v(t)) & \leq \exp^{- \frac{\gamma_s \tau(t)}{2\lambda_{M}}} V(e_v(0)) 
+ 4 \frac{ 1- \exp^{- \frac{\gamma_s \tau(t)}{2 \lambda_{M}}}  }{ \gamma_s} \lambda^2_{\text{M}} \bar{G}. 
\end{align*}
The latter allows us to write, for all $t \in [0,T]$
\begin{align*}
V(e_v(t)) & \leq 
V(e_v(0)) \exp^{- \frac{ \gamma_s \tau(t)}{2\lambda_{M}}}  
+ 4 \frac{ \lambda^2_{\text{M}} \bar{G}}{ \gamma_s}. 
\end{align*}
Hence, for all $t \in [0,T]$, we have 
\begin{equation}
\label{eqneeded1}
\begin{aligned}  
 V(e_v(t)) & \leq     
V(e_v(0)) \exp^{- \frac{ \gamma_s \tau(t)}{2\lambda_{M}}}  
 + 4 \frac{ \lambda^2_{\text{M}} \bar{G} }{ \gamma_s}  \exp^{\frac{\gamma_o (\tau(T)-\tau(t))}{2 \lambda_{M}}}.  
\end{aligned}
\end{equation}
Now, on the interval  $[T,+\infty)$, under GUUB and in view of \eqref{eqbound1}, we let 
\begin{equation*}
\begin{aligned}
 & \hat{G}(\gamma_o,\alpha_o) 
 \\ & := 
\sup \left\{ |G_e(x_m,e_v)| |e_v| :
\begin{matrix}
|x_m| \leq r_m(\gamma_o, \alpha_o)
\\
|e_v| \leq r_e(\gamma_o,\alpha_o)
\end{matrix}
\right\}. 
\end{aligned}
\end{equation*}
As a result, on the interval  $[T,+\infty)$,  we have 
\begin{equation*}
\begin{aligned} 
\frac{d V(e_v)}{d \tau} &  \leq  - \frac{\gamma_s}{2 \lambda_{M}} V(e_v) + 2 \lambda_M \hat{G}. 
\end{aligned}
\end{equation*}
Hence, on the interval $[T,+\infty)$, we have 
\begin{align*}
& V(e_v(t)) 
\\ &
\leq 
\exp^{- \frac{\gamma_s (\tau(t)-\tau(T))}{2 \lambda_{M}}} V(e_v(T)) 
+ \frac{4 \lambda^2_{\text{M}} \hat{G}}{\gamma_s}
\\ & \leq 
   V(e_v(0)) 
\exp^{- \frac{\gamma_s \tau(t)}{2 \lambda_{M}}} +  \frac{ 4 \lambda^2_{\text{M}} \bar{G}}{ \gamma_s}  \exp^{ \frac{\gamma_s (\tau(T)-\tau(t))}{2\lambda_{M}}}  
 + \frac{4 \lambda^2_{\text{M}} \hat{G}}{\gamma_s}
 \\ & \leq 
  V(e_v(0)) 
\exp^{- \frac{\gamma_s \tau(t)}{2 \lambda_{M}}} +  \frac{ 4 \lambda^2_{\text{M}} \bar{G}}{ \gamma_s}  \exp^{ \frac{\gamma_o (\tau(T)-\tau(t))}{2\lambda_{M}}}   
 + \frac{4 \lambda^2_{\text{M}} \hat{G}}{\gamma_s}. 
\end{align*}
The latter allows us to write, for all $t \geq 0$,
\begin{align*}
 V(e_v(t)) & \leq 
\lambda_M |\bar{x}(0)|^2_{\mathcal{A}_v}
\exp^{- \frac{\gamma_o \tau(t)}{2 \lambda_{M}}} 
\\ &
+ \frac{ 4 \lambda^2_{\text{M}} \bar{G}}{ \gamma_s}  \exp^{ \frac{\gamma_o \tau(T)}{2\lambda_{M}}} \exp^{- \frac{\gamma_o \tau(t)}{2 \lambda_{M}}}  
 + \frac{4 \lambda^2_{\text{M}} \hat{G}}{\gamma_s}. 
\end{align*}
Now, given $\varepsilon > 0$,  we choose $\gamma^*(\varepsilon) > 0$ such that
\begin{align} \label{eqtungamma} 
 4 \lambda^2_{\text{M}} \hat{G}(\gamma_o, \alpha_o) / \varepsilon   \leq \gamma^*. 
\end{align}
As a result, for each 
$\gamma_s \geq \gamma^*$, we have 
\begin{equation}
\label{eqneeded2} 
\begin{aligned} 
 \hspace{-0.3cm}  V(e_v(t))  & \leq 
\lambda_M |\bar{x}(0)|^2_{\mathcal{A}_v}
\exp^{- \frac{\gamma_o \tau(t)}{2 \lambda_{M}}} 
\\ &
+ \frac{ 4 \lambda^2_{\text{M}} \bar{G} \exp^{ \frac{\gamma_o \tau(T)}{2\lambda_{M}}}}{ \gamma^*}   \exp^{- \frac{\gamma_o \tau(t)}{2 \lambda_{M}}}  
 + \varepsilon   \quad \forall t \geq 0.  
\end{aligned}
\end{equation}
Finally, combining \eqref{eqneeded1}-\eqref{eqneeded2} and using the fact that $\tau(t) \geq \alpha^* t$, the GpAS property is verified for 
\begin{align*}
\beta_1(a, t) & :=
\frac{\lambda_M a^2}{\lambda_m} \exp^{- \frac{ \gamma_o t}{2\lambda_{M}}},
\\
 \beta_2(r_o, t) & :=  
\frac{4 \lambda^2_{\text{M}} \bar{G} \exp^{\frac{\gamma_o \alpha^*}{2\lambda_{M}} T } }{ \lambda_m}   \exp^{-\frac{ \gamma_o t}{2 \lambda_{M}}}.
\end{align*}
\end{proof}

\begin{lemma} \label{lemadded}
Consider the networked system \eqref{system_xinew} such that Assumptions~\ref{assGPH}-\ref{Assbd}, the GUB, the GUUB, and the global practical synchronization properties hold. Then, the 
practical-optimality property in (V) also holds.  
\end{lemma}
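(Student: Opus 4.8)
The plan is to establish optimality for the \emph{weighted-average} state $x_m := (\hat v^\top \otimes I_n)x$ introduced in the proof of Lemma~\ref{lemGPracS}, and then transfer the conclusion to the individual agents using the practical synchronization already available from property~(IV). Working in the time scale $\tau$ and the coordinates $(x_m,e_v)$ of Lemma~\ref{lemGPracS} is decisive: because $\hat v^\top(L+\tilde L)=0$, the large consensus term $\gamma_s[(L+\tilde L)\otimes I_n]x$ is annihilated in the $x_m$-dynamics \eqref{xmdyn}, leaving $\frac{dx_m}{d\tau}=-\sum_{i} \hat v_i\,(x_i-\Pi_{X_i}(x_i))+E(x_m,e_v)$, where $E$ collects the residual terms carrying a factor $\tilde\Gamma$, $\tilde\Theta/\alpha_s$, $p/\alpha_s$, or $\tfrac{d\hat v}{d\tau}$. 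By the GUB/GUUB properties and Assumption~\ref{Assbd}, $E$ is bounded by a quantity $\eta$ that can be made arbitrarily small by taking $\alpha_s$ large and by taking $|e_v|$ (hence $|x|_{\mathcal A_s}$) small through a large $\gamma_s$.

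As a Lyapunov function I would take $W_o(x_m):=|x_m|_{X_o}^2$, the squared distance of the average to the intersection $X_o=\bigcap_i X_i$, which by \eqref{eqCDdist} is differentiable with $\nabla W_o=2(x_m-\Pi_{X_o}(x_m))$. Writing $x_i=x_m+\delta_i$ (with $|\delta_i|$ controlled by $|e_v|$ and $d_e/\alpha_s$) and using $1$-Lipschitzness of $\Pi_{X_i}$ to replace $\Pi_{X_i}(x_i)$ by $\Pi_{X_i}(x_m)$ up to an $O(|\delta_i|)$ error, the leading term of $\frac{dW_o}{d\tau}$ becomes $-2\sum_i\hat v_i\,(x_m-\Pi_{X_o}(x_m))^\top(x_m-\Pi_{X_i}(x_m))$. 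Applying \eqref{proj_ineq_1+} with $A=X_o\subseteq B=X_i$ bounds this by $-2\sum_i\hat v_i|x_m|_{X_i}^2$, so that $\frac{dW_o}{d\tau}\le -2(\min_i\hat v_i)\sum_i|x_m|_{X_i}^2+2\,r_m\,\eta$, where $r_m$ is the ($r_o$-independent) ultimate bound on $|x_m|$ furnished by GUUB.

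The remaining, and main, step is an error-bound argument linking $\sum_i|x_m|_{X_i}^2$ to $|x_m|_{X_o}$. On the compact set $\{|x_m|\le r_m,\ |x_m|_{X_o}\ge \varepsilon/2\}$ the continuous map $x_m\mapsto\sum_i|x_m|_{X_i}^2$ attains a positive minimum $\mu>0$; it cannot vanish there, since $\sum_i|x_m|_{X_i}^2=0$ would force $x_m\in X_o$. Hence, whenever $|x_m|_{X_o}\ge\varepsilon/2$, we have $\frac{dW_o}{d\tau}\le-2(\min_i\hat v_i)\mu+2r_m\eta$, and choosing $\alpha^{**}$, $\gamma^{**}$ large enough that $\eta\le(\min_i\hat v_i)\mu/(2r_m)$ makes this strictly negative, so $x_m$ reaches and remains in $\{|x_m|_{X_o}<\varepsilon/2\}$ after a finite $\tau$-time, hence a finite $t$-time since $\tau(t)\ge\alpha^{**}t$. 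Finally, since $\Pi_{X_o}(x_m)\in X_o\subseteq X_i$, it holds that $|x_i|_{X_i}\le|x_i-\Pi_{X_o}(x_m)|\le|\delta_i|+|x_m|_{X_o}$; enlarging $\gamma^{**}$ once more so that the synchronization bound forces $|\delta_i|\le\varepsilon/2$ yields $|x_i|_{X_i}\le\varepsilon$ for all $i$ and all $t\ge T_o$, which is \eqref{eqopti}. The delicate points are keeping the thresholds $\gamma^{**},\alpha^{**}$ independent of $r_o$ (which is why the whole estimate is run on the $r_o$-independent ultimate-bound region, i.e.\ after the GUUB time $T$) and the non-quantitative nature of $\mu$, obtained by compactness rather than through an explicit regularity constant for the family $\{X_i\}$.
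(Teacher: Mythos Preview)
Your strategy is genuinely different from the paper's and, modulo one gap discussed below, it is sound. The paper works directly in the original coordinates with the Lyapunov function $Z(x)=\sum_i v_i|x_i|_{X_o}^2$; the large consensus term is not annihilated but is shown to contribute $-y^\top R y\le 0$ (with $y_i=x_i-\Pi_{X_o}(x_i)$ and $R=V\Gamma\Theta L+L^\top\Gamma\Theta V$) by combining the variational inequality for projections with the positive semi-definiteness of $R$ established in Lemma~\ref{propLyapder}. The paper then invokes a \emph{linear regularity} bound $|x|_{X_o}\le\psi\max_i|x|_{X_i}$ in place of your compactness constant $\mu$, and because the sublevel sets of $Z$ do not nest nicely inside $\mathcal{A}_o+\varepsilon\mathbb{B}$, it needs an additional excursion argument (trajectories may leave $\mathcal{A}_o+\tfrac{\varepsilon}{4\psi}\mathbb{B}$, but only for a time $T_\alpha=O(1/\alpha_s)$, during which a second function $d(x)=\max_i|x_i|^2_{X_o}$ grows by at most $\varepsilon/2$). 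Your route---push optimality onto the average $x_m$, obtain forward invariance of $\{|x_m|_{X_o}<\varepsilon/2\}$ directly, then lift to each $x_i$ via synchronization---is conceptually cleaner and avoids the excursion bookkeeping; the paper's route, in turn, never leaves the original coordinates and therefore needs no regularity of $t\mapsto\Theta(t)$.

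The gap in your argument is precisely there: property~(V) is stated for \emph{arbitrary} signals $\Gamma,\Theta:\mathbb{R}_{\ge 0}\to\mathcal{D}$ with $\gamma_s\ge\gamma^{**}$, $\alpha_s\ge\alpha^{**}$, with no differentiability assumed. Your residual $E$ contains the term $(\tfrac{d\hat v}{d\tau})^\top x$, and since $\hat v=(I+\tilde\Theta/\alpha_s)^{-1}v$, controlling $\tfrac{d\hat v}{d\tau}$ requires a bound on $\tfrac{d\tilde\Theta}{dt}$, which is simply not available in this setting. (It \emph{is} available once $\Theta$ is governed by \eqref{xi-alpha}, so your argument would suffice for Theorem~\ref{thm1}, but not for Lemma~\ref{lemadded} as stated.) You cannot bypass this by switching to the fixed average $\bar x_m=(v^\top\otimes I_n)x$: then the consensus contribution to $\dot{\bar x}_m$ contains $\gamma_s(v^\top\tilde\Theta L\otimes I_n)x$ and $\alpha_s(v^\top\tilde\Gamma L\otimes I_n)x$, which are of order $(\gamma_s+\alpha_s)|x|_{\mathcal A_s}$ and cannot be made small uniformly in $\gamma_s$ by property~(IV) alone. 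The paper's device---using $Z$ together with the inequality $(x_i-\Pi_{X_o}(x_i))^\top(\Pi_{X_o}(x_j)-\Pi_{X_o}(x_i))\le 0$ to absorb the entire consensus term as a nonpositive quadratic form---is exactly what lets it avoid differentiating any $\Theta$-dependent quantity. To repair your proof you would either need to add a Lipschitz assumption on $t\mapsto\Theta(t)$, or replace $W_o(x_m)$ by a Lyapunov function with a fixed (time-independent) weight and handle the consensus term as the paper does.
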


\begin{proof}
We start using the Lyapunov function candidate 
\begin{align}
  Z(x) := \sum^{N}_{i=1} v_i |x_i|^2_{X_o},  
\end{align}
with $v := (v_{1},v_{2}, ...,  v_{N})^\top \in \text{Ker}(L^\top)$.
Along the network's dynamics, we verify
\begin{align*}
   \dot{Z}(x) & = 2 \sum^{N}_{i=1} v_i   \sum_{j = 1}^{N} (\gamma_i \alpha_i a_{ij}) (x_i - \Pi_{X_o}(x_i))^\top (x_j -x_i)  
   \\ &
   -2 \sum^{N}_{i=1}  v_i \alpha_i (x_i - \Pi_{X_o}(x_i))^\top  (x_i - \Pi_{X_i}(x_i))  
   \\ & 
   + 2 \sum^{N}_{i=1} v_i (x_i - \Pi_{X_o}(x_i))^\top p_i(x_i). 
\end{align*}
Using \eqref{proj_ineq_1}, we conclude that 
$$ (x_i - \Pi_{X_o}(x_i))^\top  (x_i - \Pi_{X_i}(x_i)) \geq |x_i|_{{X_i}}. $$
Similarly, under \eqref{proj_ineq_1+}, we conclude that
$$ (x_i - \Pi_{X_o}(x_i))^\top ( \Pi_{X_o}(x_j) - \Pi_{X_o}(x_i)) \leq 0. $$
Hence, 
\begin{align*} 
& (x_i - \Pi_{X_o}(x_i))^\top (x_j -x_i) \leq  \\ &  (x_i - \Pi_{X_o}(x_i))^\top \left[ (x_j - \Pi_{X_o}(x_j)) - (x_i - \Pi_{X_o}(x_i)) \right]. 
\end{align*}
As a result, if we assume that $\alpha_s \geq \alpha_o$ and $\gamma_s \geq \gamma_o$, it would follow that
\begin{align*}
\dot{Z} & \leq - 
\begin{bmatrix}
(x_1 - \Pi_{X_o}(x_1)) 
\\ \vdots \\
(x_N - \Pi_{X_o}(x_N))
\end{bmatrix}^\top
   R
\begin{bmatrix}
(x_1 - \Pi_{X_o}(x_1)) 
\\ \vdots \\
(x_N - \Pi_{X_o}(x_N))
\end{bmatrix}  
 \\ &
 -2 \sum^{N}_{i=1} v_i \alpha_i |x_i|^2_{X_i} + 2 \sum^{N}_{i=1} v_i (x_i - \Pi_{X_o}(x_i))^\top p_i(x_i),   
\end{align*}
where $R := V \Gamma \Theta L + L^\top  V \Gamma \Theta$.
We already shown in the proof of 
\ifitsdraft
Lemma~\ref{propLyapder} 
\else
\cite[Lemma 2]{Appendix--}
\fi 
that 
\begin{align*}
x^\top R x \geq 0 \qquad \forall \gamma_s \geq \gamma_o, ~ \forall \alpha_s \geq \alpha_o.   
\end{align*}
Hence, we conclude that 
\begin{align*}
   \dot{Z} & \leq 
 -2 \sum^{N}_{i=1} v_i \alpha_i |x_i|^2_{X_i} + 2 \bar{p} \sum^{N}_{i=1} v_i |x_i|_{X_o},  
 \\ & = -2 \sum^{N}_{i=1} v_i \left( \alpha_i |x_i|^2_{X_i} -  \bar{p} |x_i|_{X_o} \right).
\end{align*} 
Since $\Theta \in \mathcal{D}$, we conclude that 
\begin{align*}
    \Theta = \alpha_s I_N + \tilde{\Theta} 
\end{align*}
with $\tilde{\Theta} \in d_e\mathbb{B}^N$.
Therefore,  
\begin{align*}
    \dot{Z}(x) \leq - 2 \sum^{N}_{i=1} v_i( \alpha_s |x_i|^2_{X_i}  + \tilde{\Theta}_{ii}  |x_i|^2_{X_i} - \bar{p}|x_i|_{X_o}).
\end{align*}
At this point, we let 
\begin{align*}
S_i(x_i) := \max_{a \in d_e \mathbb{B}} \{ a |x_i|^2_{X_i} - \bar p |x_i|_{X_o} \} \quad   \forall i \in \mathcal{V}, 
\end{align*}
to be able to write 
\begin{align}
    \dot{Z}(x) \leq - 2 \sum^{N}_{i=1} v_i( \alpha_s |x_i|^2_{X_i} + S_i(x_i)).
\end{align}

Next, since $\gamma_s \geq \gamma_o$ and   $\alpha_s \geq \alpha_o$, we use the GUUB property to conclude the existence of $T(|x(0)|, \gamma_o, \alpha_o) \geq 0$ such that   
\begin{align} 
\begin{matrix}
|x(t)| \leq r(\gamma_o,\alpha_o)
 \end{matrix}
  \qquad \forall t \geq T. \label{eqbound2_step2}
\end{align}
As a result, for all $t \geq T$, we verify  
\begin{equation*}
\dot{Z}(x) \leq - 2 \sum^{N}_{i=1} v_i\left( \alpha_s |x_i|^2_{X_i} - \bar{S}_i \right), 
\end{equation*}
where $\bar{S}_i := \sup_{a\in r \mathbb{B}} \left\{|S_i(a)|  \right\}$. 
A this point, we use \cite[Theorem 1]{19921570108} to conclude the existence of $\psi > 1$ such that 
$$ |x|_{X_o} \leq \psi  \max_i \{ |x|_{X_i} \} \qquad \forall x \in \mathbb{R}^n.  $$
Furthermore, we let $\bar{\alpha}(\varepsilon) \geq \alpha_o$ such that
$$  \bar{\alpha} 
 \sum^{N}_{i=1} v_i |x_i|^2_{X_i} - 2 \sum^{N}_{i=1} \bar{S}_i  \geq 0 \quad \forall x \notin \mathcal{A}_o + \frac{\varepsilon}{4 \psi} \mathbb{B}. $$
As a consequence, for all $\alpha_s \geq \bar{\alpha}(\varepsilon)$ and for all $t \geq T$,  
\begin{equation}
\label{Zdot}
    \dot{Z}(x) \leq 
    - \frac{\alpha_s  \min_i\{ v_i \} \varepsilon^2}{4} \qquad \forall x \notin \mathcal{A}_o + \frac{\varepsilon}{4 \psi} \mathbb{B}. 
\end{equation} 
The latter implies that 
$$x(t_1) \in \mathcal{A}_o + \frac{\varepsilon}{4 \psi} \mathbb{B}, \quad \text{for some } t_1 \in \left[T, T+ \frac{4 Z(x(0))}{\bar{\alpha} \min_i\{v_i\} \varepsilon^2} \right], $$ 
since otherwise $t \mapsto Z(x(t))$ would reach zero before the end of the latter interval, which leads to a contradiction.
 Now, we let 
$$ \bar{Z} := \sup \left\{ Z(x) : x \in \mathcal{A}_o + \frac{\varepsilon}{4 \psi} \mathbb{B}  \right\},  $$
which is finite since $\mathcal{A}_o + \frac{\varepsilon}{4 \psi} \mathbb{B}$ is compact and $Z$ is continuous.  

Assume further the existence of $t_2 \geq t_1$ such that 
$$ x(t) \in \mathcal{A}_o + \frac{\varepsilon}{4 \psi} \mathbb{B} \qquad \forall t \in [t_1,t_2] $$
and, for some $t_3 > t_2$,  
$$ t \mapsto x(t) \notin \mathcal{A}_o + \frac{\varepsilon}{4 \psi} \mathbb{B} \qquad \forall  t \in (t_2,t_3]. $$ 
  This means that 
$$ Z(x(t_2)) \leq \bar{Z}, ~~  \dot{Z}(x(t)) \leq - \frac{\alpha_s  \min_i\{ v_i \} \varepsilon^2}{4} \quad \forall t \in [t_2,t_3]. $$
Since otherwise $t \mapsto Z(x(t))$ would reach zero, we must have
$$ t_3 - t_2 \leq T_{\alpha} :=  \frac{4 \bar{Z}}{\alpha_s \min_i \{ v_i \} \varepsilon^2}.  $$
That is, any interval 
$[\underline{t}, \overline{t}] \subset [t_1, + \infty)$
such that
$$ x(\underline{t}) \in  \mathcal{A}_o + \frac{\varepsilon}{4\psi} \mathbb{B}, ~~~ x(t) \notin  \mathcal{A}_o + \frac{\varepsilon}{4\psi} \mathbb{B} \quad \forall t \in (\underline{t}, \overline{t}], $$ 
must verify 
$  \overline{t} - \underline{t} \leq  T_\alpha$. 
In words, after $t_1$, if the solution $x$ leaves $\mathcal{A}_o + \frac{\varepsilon}{4\psi} \mathbb{B}$, it cannot stay away from it for a duration longer than $T_\alpha$, before re-entering again. 

To complete the proof, it is enough to establish the existence $\alpha^{**}(\varepsilon) \geq \bar{\alpha} (\varepsilon)$ such that, for all $\alpha_s \geq \alpha^{**}$, for all $\gamma_s \geq \gamma^{**}(\varepsilon) := \gamma^{*}\left(\frac{\varepsilon}{4 \psi} \right)$,  it holds that 
\begin{align} \label{eqimply} 
& \text{For all } t \geq T_o, \nonumber 
\\ &
\hspace{-0.4cm} x(t) \in  \mathcal{A}_o + \frac{\varepsilon \mathbb{B}}{4 \psi}  \Rightarrow  x(t+s) \in  \mathcal{A}_o + \varepsilon \mathbb{B} ~~ \forall s \in [0, T_\alpha]. 
\end{align}
where 
$$ T_o := T + T_1 + \frac{4 Z(x(0))}{\bar{\alpha} \min_i\{v_i\} \varepsilon^2},  $$ 
and $T_1(r_o,\alpha^*,\gamma^{**})>0$ is such that 
 \begin{align} \label{equse1}
 |x(t)|_{\mathcal{A}_s}  \leq \frac{\varepsilon \mathbb{B}}{4 \psi} \quad \forall t \geq T_1, 
 \end{align}
see property (IV).

To verify \eqref{eqimply}, we introduce the Lyapunov function candidate 
$ d(x) := \max_{i\in \mathcal{V}} |x_i|^2_{X_o}$.
We can show that along the network dynamics,  we have 
\begin{align*} 
\dot{d}(x(t)) & \leq \left( \max_{i\in \mathcal{V}} |x_i(t)|_{X_o} \right) \bar{p} 
\\ & 
\leq q(r) := \sup_{a \in r \mathbb{B}}  \left[ \left( \max_{i\in \mathcal{V}} |a|_{X_o} \right) \bar{p} \right].  
\end{align*}
As a result, we verify 
$$  d(x(t + s)) \leq q(r) T_\alpha +  d(x(t)) \qquad \forall s \in [0, T_\alpha].  $$
Hence, for all $i \in \mathcal{V}$, we have 
$$  |x_i(t + s)|_{X_i} \leq \sqrt{q(r) T_\alpha} +  \sqrt{d(x(t))} \quad \forall s \in [0, T_\alpha].  $$
Furthermore, for $i_o := \text{argmax}_{i\in \mathcal{V}} |x_i|^2_{X_o} $,  we note that 
$$ \sqrt{d(x(t))} =  |x_{i_o}(t)|_{X_o} \leq \psi \max_{j\in \mathcal{V}} |x_{i_o}(t)|_{X_j}.  $$
Next, by letting $j_o := \text{argmax}_{j \in \mathcal{V}} |x_{i_o}|_{X_j}$, we obtain 
$$ \sqrt{d(x(t))} \leq \psi |x_{i_o}(t)|_{X_{j_o}}.  $$
Hence, using \eqref{eqvarproj}, we obtain
\begin{align*} 
\sqrt{d(x(t))} &  \leq  \psi |x_{j_o}(t)|_{X_{j_o}} + \psi  |x_{i_o}(t) - x_{j_o}(t)|.  
\end{align*}
Using \eqref{equse1} and the left-hand side of \eqref{eqimply}, we obtain 
\begin{align*} 
\sqrt{d(x(t))} &  \leq  \varepsilon/2.  
\end{align*}
Finally, by taking $\alpha^{**} := \frac{16 q(r) \bar{Z}}{\varepsilon^4 \min_i\{ v_i\} }$, we conclude that 
$$ \sqrt{q(r) T_\alpha} \leq \varepsilon/2 \qquad \forall \alpha_s \geq \alpha^{**}.   $$
Hence, 
$$  |x_i(t + s)|_{X_i} \leq \varepsilon \qquad \forall s \in [0, T_\alpha].  $$
\end{proof}

\else 

\fi

\end{document}